\documentclass[12pt,a4paper,reqno]{amsart} 

\title[Weyl product on quasi-Banach modulation spaces]{The Weyl product on quasi-Banach modulation spaces}

\author[Y. Chen]{Yuanyuan Chen}

\author[J. Toft]{Joachim Toft}

\author[P. Wahlberg]{Patrik Wahlberg}

\pagestyle{plain}
\usepackage{amssymb}
\usepackage{latexsym}
\usepackage{amsmath}
\usepackage{mathrsfs}
\usepackage{euscript}
\usepackage{calc}                         
\usepackage{cite}
\usepackage{color}

\newcommand{\scal}[2]{\langle #1,#2\rangle}

\newcommand{\re}{\mathbf R}
\newcommand{\rr}[1]{\mathbf R^{#1}}
\newcommand{\zz}[1]{\mathbf Z^{#1}}
\newcommand{\nn}[1]{\mathbf N^{#1}}

\newcommand{\co}{\mathbf C}

\newcommand{\nm}[2]{\Vert #1\Vert _{#2}}

\newcommand{\op}{\operatorname{Op}}

\newcommand{\sets}[2]{\{ {\,}#1{\,};{\,}#2{\,}\} }
\newcommand{\Sets}[2]{\left \{ \, #1\, ;\, #2\, \right \} }
\newcommand{\ep}{\varepsilon}

\newcommand{\cdo}{\, \cdot \, }

\newcommand{\wpr}{{\text{\footnotesize{$\#$}}}}

\newcommand{\eabs}[1]{\langle #1\rangle}

\newcommand{\vrum}{\vspace{0.1cm}}

\newcommand{\masfR}{\mathsf R}

\newcommand{\maclS}{\mathcal S}
\newcommand{\maclV}{\mathcal V}

\newcommand{\mascF}{\mathscr F}
\newcommand{\mascS}{\mathscr S}
\newcommand{\mascP}{\mathscr P}
\newcommand{\splM}{\EuScript M}

\newcommand{\mabfa}{\boldsymbol a}
\newcommand{\mabfb}{\boldsymbol b}
\newcommand{\mabfc}{\boldsymbol c}
\newcommand{\mabfj}{\boldsymbol j}
\newcommand{\mabfk}{\boldsymbol k}

\newcommand{\mabfp}{{\boldsymbol p}}
\newcommand{\mabfq}{\boldsymbol q}

\newcommand{\M}{\operatorname{\mathbf{M}}}


\setcounter{section}{\value{section}-1}   

\numberwithin{equation}{section}          

\newtheorem{thm}{Theorem}
\numberwithin{thm}{section}
\newtheorem*{tom}{\rubrik}
\newcommand{\rubrik}{}
\newtheorem{prop}[thm]{Proposition}

\newtheorem{lemma}[thm]{Lemma}

\theoremstyle{definition}

\newtheorem{defn}[thm]{Definition}

\theoremstyle{remark}

\newtheorem{rem}[thm]{Remark}

\frenchspacing

\begin{document}

\begin{abstract}
We study the bilinear Weyl product acting on quasi-Banach modulation spaces.
We find sufficient conditions for continuity of the Weyl product and we derive necessary conditions.  
The results extend known results for Banach modulation spaces. 
\end{abstract}

\keywords{Pseudodifferential calculus, Weyl product, modulation spaces, quasi-Banach spaces, Gabor frames}

\subjclass[2010]{35S05, 47G30, 42B35, 46A16, 46E35, 46F05}

\maketitle

{\let\thefootnote\relax\footnotetext{Corresponding author: Patrik Wahlberg,
phone +46470708637. Authors' addresses: 
Department of Mathematics, Linn\ae us University, 351 95 V\"axj\"o, Sweden. Email: yuanyuanchen0822@gmail.com, joachim.toft@lnu.se, patrik.wahlberg@lnu.se.}}

\section{Introduction}
\label{sec0}

In this paper we study the Weyl product acting on weighted modulation
spaces with Lebesgue parameters in $(0,\infty]$. 
We work out conditions on the weights and the Lebesgue parameters
that are sufficient for continuity of the Weyl product, and we also prove
necessary conditions. 

\par

The Weyl product or twisted product is the product of symbols in the Weyl
calculus of pseudodifferential operators corresponding to
operator composition. This means that the Weyl product 
\begin{equation*}
(a_1,a_2) \mapsto a_1 \wpr a_2
\end{equation*}
of two distributions $a_1,a_2$ defined on
the phase space $T^* \rr d\simeq \rr {2d}$ is defined by 
\begin{equation*}
\op^w (a_1 \wpr  a_2) = \op ^w (a_1) \circ \op^w (a_2) 
\end{equation*}
provided the composition is well defined. 

\par

Our result on sufficient conditions is as follows. Suppose $\omega_j$, $j=0,1,2$,
are moderate weights on $\rr {4d}$ that satisfy
\begin{equation*}
\omega_0(Z+X,Z-X) \lesssim \omega_1(Y+X,Y-X) \, \omega_2(Z+Y,Z-Y),
\quad X,Y,Z \in \rr {2d}. 
\end{equation*}
Suppose $p_j,q_j\in (0,\infty ]$, $j=0,1,2$, satisfy
\begin{equation*}
\frac 1{p_0} \le \frac 1{p_1}+\frac 1{p_2},
\end{equation*}
and either
\begin{equation*}
q_1,q_2 \le q_0 \le  \min (1,p_0)  
\end{equation*}
or
\begin{equation*}
\min (1,p_0) \le q_1,q_2 \le q_0
\quad \text{and} \quad
\frac 1{\min (1,p_0)}+\frac 1{q_0} \le \frac 1{q_1}+\frac 1{q_2}   .
\end{equation*}
Denote the Gelfand--Shilov space of order $1/2$ by $\maclS _{1/2}$, 
and the weighted modulation space with Lebesgue parameters $p,q>0$
and with weight $\omega$ by $\splM^{p,q}_{(\omega )}$. 
Then the map
$(a_1,a_2)\mapsto a_1 \wpr  a_2$ from
$\maclS _{1/2}(\rr {2d})\times \maclS _{1/2}(\rr {2d})$ to
$\maclS _{1/2}(\rr {2d})$ extends uniquely to a continuous map from
$\splM^{p_1,q_1}_{(\omega _1)}(\rr {2d})
\times \splM^{p_2,q_2}_{(\omega _2)}(\rr {2d})$ to
$\splM^{p_0,q_0}_{(\omega _0)}(\rr {2d})$, and
\begin{equation}\label{Wprodcont}
\nm{a_1 \wpr a_2}{\splM_{(\omega_0 )}^{p_0,q_0}}
\lesssim \nm{a_1}{\splM_{(\omega_1 )}^{p_1,q_1}}
\nm{a_2}{\splM_{(\omega_2 )}^{p_2,q_2}}. 
\end{equation}

\par

As a consequence for unweighted modulation spaces we obtain new
conditions on Lebesgue parameters that are sufficient for
$\splM^{p,q}(\rr {2d})$ to be an algebra: $q,p \in (0,\infty]$ and
$q \le \min(1,p)$. 

\par

The necessary conditions we deduce are as follows. 
Suppose \eqref{Wprodcont} holds for all $a_1,a_2 \in \mathscr S(\rr {2d})$,
for a triple of polynomial type weights $\omega_j$, $j=0,1,2$ interrelated in
a certain way, see \eqref{weightcond2}. Then 
\begin{equation*}
\frac 1{p_0}\le \frac 1{p_1}+\frac 1{p_2},\quad
\frac 1{p_0}\le \frac 1{q_1}+\frac 1{q_2}
\quad \text{and}\quad
q_1,q_2\le q_0,
\end{equation*}
which are strictly weaker than the sufficient conditions. 

\par

Our results for the Weyl product are special cases of results formulated
and proved for a family of pseudodifferential calculi parametrized by
real matrices $A \in \rr {d \times d}$. 
In fact we work with
a symbol product indexed by $A \in \rr {d \times d}$, denoted and defined by 
\begin{equation*}
\op _A(a \wpr _A b) = \op _A(a) \circ \op _A(b)
\end{equation*}
where $\op _A(a)$ is the $A$-indexed pseudodifferential operator with symbol $a$. 
This family of calculi contains the Weyl quantization as the special case $A=\frac12 I$. 

\par

The sufficient conditions and the necessary conditions that we find extend
results \cite{Cordero1,Holst1} where the same problem was studied for the
narrower range of Lebesgue parameters $[1,\infty]$. In the latter case
modulation spaces are Banach spaces, whereas they are merely
quasi-Banach spaces if a Lebesgue parameter is smaller than one. 

\par

The Weyl product on Banach modulation spaces has been studied in
 e.{\,}g. \cite{Cordero1,Holst1,Sjostrand1,Gc3,GrRz,Labate1,Toft1}.
In \cite{Cordero1} conditions on the Lebesgue parameters were found
that are both necessary and sufficient for continuity of the Weyl product,
thus characterizing the Weyl product acting on Banach modulation spaces. 

\par

One possible reason that we do not obtain characterizations in the full
range of Lebesgue parameters $(0,\infty]$ is that new difficulties arise
as soon as a Lebesgue parameter is smaller than one. The available
techniques are quite different, and many tools that are useful in the
Banach space case, e.g. duality and complex interpolation, are not
applicable or fraught with subtle difficulties. 

\par

Our technique to prove the sufficient conditions consists of a discretization
of the Weyl product by means of a Gabor frame. This reduces the
continuity of the Weyl product to the continuity of certain
infinite-dimensional matrix operators. A similar idea has
been developed in \cite{Wahlberg1}. 

\par

The paper is organized as follows. 
Section \ref{sec1} fixes notation and gives the background on
Gelfand--Shilov function and distribution spaces, pseudodifferential calculi,
modulation spaces, Gabor frames, and symbol product results for Banach
modulation spaces. 

\par

Section \ref{sec2} contains the result on sufficient conditions for continuity
on quasi-Banach modulation spaces (Theorem \ref{Thm:BilinCase}). 
Section \ref{sec3} contains the result on necessary conditions for continuity
on quasi-Banach modulation spaces (Theorem
\ref{Thm:SharpnessBiLinCase}). Finally in Appendix  we show a Fubini type
result for Gelfand--Shilov distributions that is needed in the definition of
the short-time Fourier transform of a Gelfand--Shilov distribution. 

\par

\section{Preliminaries}
\label{sec1}

\par

\subsection{Weight functions}

\par
 
 A \emph{weight} on $\rr d$ is a positive function $\omega
\in  L^\infty _{loc}(\rr d)$ such that $1/\omega \in  L^\infty _{loc}(\rr d)$.
We usually assume that $\omega$ is ($v$-)\emph{moderate},
for some positive function $v \in
 L^\infty _{loc}(\rr d)$. This means
\begin{equation}\label{moderate}
\omega (x+y) \lesssim \omega (x)v(y),\qquad x,y\in \rr d.
\end{equation}
Here $f(\theta ) \lesssim g(\theta )$ means that $f(\theta ) \le c g(\theta )$
holds uniformly for all $\theta$
in the intersection of the domains of $f$ and $g$
for some constant $c>0$, and we
write $f\asymp g$
when $f\lesssim g \lesssim f$. 
Note that \eqref{moderate} implies 
the estimates
\begin{equation}\label{moderateconseq}
v(-x)^{-1}\lesssim \omega (x)\lesssim v(x),\quad x\in \rr d.
\end{equation}
If $v$ in \eqref{moderate} can be chosen as a polynomial
then $\omega$ is called polynomially moderate or a weight of
polynomial type. We let
$\mascP (\rr d)$ and $\mascP _E(\rr d)$ be the sets of all weights of
polynomial type and moderate weights on $\rr d$, respectively.

\par

If $\omega \in \mascP _E(\rr d)$ then there exists $r>0$ such that 
$\omega$ is $v$-moderate for $v(x) = e^{r|x|}$\cite{Gc2.5}. 
Hence by \eqref{moderateconseq} for any $\omega \in \mascP
_E(\rr d)$ there is $r>0$ such that
\begin{equation}\label{WeightExpEst}
e^{-r|x|}\lesssim \omega (x)\lesssim e^{r|x|},\quad x\in \rr d.
\end{equation}

\par

A weight $v$ is called submultiplicative if $v$ is even and \eqref{moderate}
holds with $\omega =v$. In the paper $v$ and $v_j$ for
$j\ge 0$ will denote submultiplicative weights if not otherwise stated. 

\par

\subsection{Gelfand--Shilov spaces}

\par

Let $h,s\in \mathbf R_+$ be fixed. Then $\mathcal S_{s,h}(\rr d)$
is the set of all $f\in C^\infty (\rr d)$ such that
\begin{equation*}
\nm f{\mathcal S_{s,h}}\equiv \sup \frac {|x^\beta \partial ^\alpha
f(x)|}{h^{|\alpha | + |\beta |}(\alpha !\, \beta !)^s}
\end{equation*}
is finite, where the supremum is taken over all $\alpha ,\beta \in
\mathbf N^d$ and $x\in \rr d$.

\par

Obviously $\mathcal S_{s,h}$ is a Banach space which increases
with $h$ and $s$, and it is contained in the Schwartz space $\mathscr S$.
(Inclusions of function and distribution spaces understand embeddings.)
The topological dual $\mathcal S_{s,h}'(\rr d)$ of $\mathcal S_{s,h}(\rr d)$ is
a Banach space which contains $\mathscr S'(\rr d)$ (the tempered distributions).
If $s>1/2$, then $\mathcal
S_{s,h}$ and $\bigcup _{h>0}\maclS _{1/2,h}$ contain all finite linear
combinations of Hermite functions.

\par

The (Fourier invariant) \emph{Gelfand--Shilov spaces} $\mathcal S_{s}(\rr d)$ and
$\Sigma _s(\rr d)$ are the inductive and projective limits respectively
of $\mathcal S_{s,h}(\rr d)$ with respect to $h$. This implies
\begin{equation}\label{GSspacecond1}
\mathcal S_s(\rr d) = \bigcup _{h>0}\mathcal S_{s,h}(\rr d)
\quad \text{and}\quad \Sigma _{s}(\rr d) =\bigcap _{h>0}\mathcal
S_{s,h}(\rr d).
\end{equation}
The topology for $\mathcal S_s(\rr d)$ is the
strongest topology such that each inclusion $\mathcal S_{s,h}(\rr d) \subseteq\mathcal S_s(\rr d)$
is continuous. The projective limit $\Sigma _s(\rr d)$ is a Fr{\'e}chet
space with seminorms $\nm \cdot{\mathcal S_{s,h}}$, $h>0$.
It holds $\mathcal S _s(\rr d)\neq \{ 0\}$ if and only if
$s\ge 1/2$, and $\Sigma _s(\rr d)\neq \{ 0\}$
if and only if $s>1/2$.

\par

For every $\ep >0$ and $s>0$,
\begin{equation*}
\Sigma _s (\rr d)\subseteq \mathcal S_s(\rr d)\subseteq
\Sigma _{s+\ep}(\rr d).
\end{equation*}

\medspace

The \emph{Gelfand--Shilov distribution spaces} $\mathcal S_s'(\rr d)$
and $\Sigma _s'(\rr d)$ are the projective and inductive limits
respectively of $\mathcal S_{s,h}'(\rr d)$.  Hence
\begin{equation}\tag*{(\ref{GSspacecond1})$'$}
\mathcal S_s'(\rr d) = \bigcap _{h>0}\mathcal
S_{s,h}'(\rr d)\quad \text{and}\quad \Sigma _s'(\rr d)
=\bigcup _{h>0} \mathcal S_{s,h}'(\rr d).
\end{equation}
The space $\mathcal S_s'(\rr d)$ is the topological dual of $\mathcal
S_s(\rr d)$, and if $s>1/2$ then $\Sigma _s'
(\rr d)$ is the topological dual of $\Sigma _s(\rr d)$ \cite{GeSh}.

\par

The action of a distribution $f$ on a test function $\phi$ is written $\scal  f \phi$, 
and the conjugate linear action is written $(u,\phi) = \scal  {u} {\overline \phi}$, consistent with the $L^2$ inner product $(\cdo ,\cdo ) = (\cdo ,\cdo )_{L^2}$ which is conjugate linear in the second argument. 

\par

The Gelfand--Shilov (distribution) spaces enjoy many invariance properties, for instance under 
translation, dilation, tensorization, coordinate transformations and (partial) Fourier transformation.

\par

We use the normalization  
\begin{equation*}
 \mascF f (\xi )= \widehat f(\xi ) = (2\pi )^{-\frac d2} \int _{\rr
{d}} f(x)e^{-i\scal  x\xi }\, dx, \qquad \xi \in \rr d, 
\end{equation*}
of the Fourier transform 
of $f\in L^1(\rr d)$, where $\scal \cdo \cdo$ denotes the
scalar product on $\rr d$. The Fourier transform $\mascF$ extends 
uniquely to homeomorphisms on $\mathscr S'(\rr d)$, $\mathcal
S_s'(\rr d)$ and $\Sigma _s'(\rr d)$, and restricts to 
homeomorphisms on $\mathscr S(\rr d)$, $\mathcal S_s(\rr d)$
and $\Sigma _s(\rr d)$, and to a unitary operator on $L^2(\rr d)$.

The \emph{symplectic Fourier transform} of $a \in
\maclS _s (\rr {2d})$ where $s\ge 1/2$ is defined by 
\begin{equation*}
\mascF _\sigma a (X)
= \pi^{-d}\int _{\rr {2d}} a(Y) \, e^{2 i \sigma(X,Y)}\,  dY,
\end{equation*}
where $\sigma$ is the symplectic form
$$
\sigma(X,Y) = \scal y \xi -
\scal x \eta ,\qquad X=(x,\xi )\in \rr {2d},\ Y=(y,\eta )\in \rr {2d}.
$$
Since $\mascF _\sigma a(x,\xi ) = 2^d \mascF a(-2\xi ,2x)$,
the definition of $\mascF _\sigma$ extends in the same way as $\mascF$.

\medspace

Let $\phi \in \maclS _s (\rr d) \setminus \{0\}$. 
The \emph{short-time Fourier transform} (STFT) $V_\phi
f$ of $f\in \maclS _s'(\rr d)$ is the distribution on $\rr {2d}$ defined by 
\begin{equation}\label{defstft}
V_\phi f (x,\xi ) = \mathscr F(f\, \overline{\phi (\cdo -x)})(\xi ) =
(2\pi )^{-\frac d2} (f,\phi (\cdo -x) \, e^{i\scal \cdo \xi}).
\end{equation}
Note that $f\, \overline{\phi (\cdo -x)} \in \maclS _s'(\rr d)$ for fixed $x \in \rr d$,
and therefore its Fourier transform is an element in $\maclS _s'(\rr d)$. 
The fact that the Fourier transform is actually a smooth function given by the
formula \eqref{defstft} is proved in Appendix.  

\par

If $T(f,\phi )\equiv V_\phi f$ for $f,\phi \in \maclS _{1/2}(\rr d)$,
then $T$ extends uniquely to sequentially continuous mappings
\begin{alignat*}{2}
T\, &:\, & \maclS _s'(\rr d)\times \maclS _s(\rr d) &\to
\maclS _s '(\rr {2d})\bigcap C^\infty (\rr {2d}),
\\[1ex]
T\, &:\, & \maclS _s'(\rr d)\times \maclS _s'(\rr d) &\to
\maclS _s '(\rr {2d}),
\end{alignat*}
and similarly when $\maclS _s$ and $\maclS _s'$ are replaced
by $\Sigma _s$ and $\Sigma _s'$, respectively, or by
$\mascS$ and $\mascS '$, respectively \cite{CPRT10,Toft8}.

Similar properties hold true if instead $T(f,\phi ) = W_{f,\phi}$, where
$W_{f,\phi}$ is the cross-Wigner distribution of $f\in \maclS _s'(\rr d)$
and $\phi \in \maclS _s(\rr d)$, given by
$$
W_{f,\phi}(x,\xi ) \equiv \mascF (f(x+\cdo /2)\overline {\phi (x-\cdo /2)})(\xi ).
$$
If $q \in [1,\infty]$, $\omega \in \mascP _E(\rr d)$, $f\in L^q_{(\omega )}(\rr d)$ and $\phi \in \Sigma _1(\rr d)$ then  
$V_\phi f$ and $W_{f,\phi}$ take the forms
\begin{align}
V_\phi f(x,\xi ) &= (2\pi )^{-\frac d2}\int _{\rr d}f(y) \, \overline {\phi
(y-x)} \, e^{-i\scal y\xi}\, dy\tag*{(\ref{defstft})$'$}
\intertext{and}
W_{f,\phi} (x,\xi ) &= (2\pi )^{-\frac d2}\int _{\rr d}f(x+y/2) \, \overline {\phi
(x-y/2)} \, e^{-i\scal y\xi}\, dy.\notag
\end{align}
Here $L^p_{(\omega )}(\rr d)$ for $p\in (0,\infty ]$ and $\omega \in \mascP _E(\rr d)$
denotes the space of all $f\in L^p_{loc} (\rr d)$ such
that $f \omega \in L^p(\rr d)$, and $\nm f {L^p_{(\omega )}} = \nm {f \omega} {L^p}$. 

\par

For $a \in \maclS _{1/2} '(\rr {2d})$ and
$\Phi \in \maclS _{1/2} (\rr {2d})  \setminus 0$ the
\emph{symplectic STFT} $\maclV _{\Phi} a$
of $a$ with respect to $\Phi$ is defined similarly as the STFT by
\begin{equation}\nonumber
\maclV _{\Phi} a(X,Y) = \mascF _\sigma \big( a\, \overline{ \Phi (\cdo -X) }
\big) (Y),\quad X,Y \in \rr {2d}.
\end{equation}

\par

There are several ways to characterize Gelfand--Shilov function
and distribution spaces, for example in terms of expansions with
respect to Hermite functions \cite{GrPiRo,JaEi},  or in terms of the
Fourier transform and the STFT \cite{ChuChuKim,GrZi,Toft8, Toft15}.

\par

\subsection{An extended family of pseudodifferential calculi}

\par

We consider a family of pseudodifferential calculi parameterized
by the real $d\times d$ matrices, 
denoted $\M(d,\re)$ \cite{CaTo,To14}.
Let $s\ge 1/2$, let $a\in \maclS _s 
(\rr {2d})$ and let $A\in \M(d,\re)$ be fixed.
The pseudodifferential operator $\op _A(a)$ is the linear and
continuous operator
\begin{equation}\label{e0.5}
\op _A(a)f (x)
=
(2\pi  ) ^{-d}\iint_{\rr {2d}} a(x-A(x-y),\xi ) \, f(y) \, e^{i\scal {x-y}\xi }\, dyd\xi
\end{equation}
when $f\in \maclS _s(\rr d)$. For
 $a\in \maclS _s'(\rr {2d})$ the operator $\op _A(a)$ is defined as the linear and
continuous operator from $\maclS _s(\rr d)$ to $\maclS _s'(\rr d)$ with
distribution kernel
\begin{equation}\label{atkernel}
K_{a,A}(x,y)=(2\pi )^{-\frac d2} \mascF _2^{-1}a (x-A(x-y),x-y).
\end{equation}
Here $\mascF _2F$ is the partial Fourier transform of $F(x,y)\in
\maclS _s'(\rr {2d})$ with respect to the $y$ variable. This
definition makes sense since 
\begin{equation}\label{homeoF2tmap}
\mascF _2\quad \text{and}\quad F(x,y)\mapsto F(x-A(x-y),x-y)
\end{equation}
are homeomorphisms on $\maclS _s'(\rr {2d})$.

\par

An important special case is $A=t I$, with
$t\in \mathbf R$ and $I \in \M(d,\re)$ denoting
the identity matrix. In this case we write $\op _t(a) = \op _{t  I}(a)$. 
The normal or Kohn--Nirenberg representation $a(x,D)$
corresponds to $t=0$, and the Weyl quantization $\op ^w(a)$
corresponds to $t=\frac 12$. Thus
$$
a(x,D) = \op _0(a) = \op (a)
\quad \text{and}\quad \op ^w(a) = \op _{1/2}(a).
$$

The Weyl calculus is connected to the Wigner distribution with the formula
\begin{align*}
(\op^w (a) f,g)_{L^2(\rr d)} & = (2\pi )^{-\frac d2} (a,W_{g,f} )_{L^2(\rr {2d})},
\\[1ex]
& \qquad a \in  \maclS _{1/2} '(\rr {2d}), \quad f,g \in \maclS _{1/2} (\rr d). 
\end{align*}

\par

For every $a_1\in \maclS _s '(\rr {2d})$ and $A_1,A_2\in
\M(d,\re)$, there is a unique $a_2\in \maclS _s '(\rr {2d})$ such that
$\op _{A_1}(a_1) = \op _{A_2} (a_2)$. The following restatement of \cite[Proposition~1.1]{To14} explains the relations between $a_1$ and $a_2$. 

\par

\begin{prop}\label{Prop:CalculiTransfer}
Let $a_1,a_2\in \maclS _{1/2}'(\rr {2d})$ and $A_1,A_2\in \M(d,\mathbf R)$.
Then
\begin{equation}\label{calculitransform}
\op _{A_1}(a_1) = \op _{A_2}(a_2) \quad \Leftrightarrow \quad
e^{i\scal {A_2D_\xi}{D_x }}a_2(x,\xi )=e^{i\scal {A_1D_\xi}{D_x }}a_1(x,\xi ).
\end{equation}
\end{prop}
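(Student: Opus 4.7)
The plan is to recast the operator identity as an identity of Schwartz kernels. Since a continuous linear map from $\maclS _{1/2}(\rr d)$ to $\maclS _{1/2}'(\rr d)$ is uniquely determined by its kernel, the equality $\op _{A_1}(a_1) = \op _{A_2}(a_2)$ is equivalent, via \eqref{atkernel}, to $K_{a_1,A_1} = K_{a_2,A_2}$. Writing $\tilde a_j := \mascF _2^{-1} a_j$ and $u = x-y$, this amounts to
\[
\tilde a_1(x - A_1 u, u) = \tilde a_2(x - A_2 u, u) \quad \text{in } \maclS _{1/2}'(\rr {2d}).
\]

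Next I would apply the two-dimensional Fourier transform $\mascF$ to both sides, using that it is a homeomorphism on $\maclS _{1/2}'(\rr {2d})$. Writing $\mascF = \mascF _1 \mascF _2$ gives $\mascF\tilde a_j = \mascF _1 a_j$, and the affine change of variable $x\mapsto x - A_j u$ becomes a shear in the dual variable: for $(\eta,\sigma)$ dual to $(x,u)$,
\[
\mascF\bigl[(x,u)\mapsto \tilde a_j(x - A_j u, u)\bigr](\eta,\sigma) = \mascF _1 a_j\bigl(\eta,\, \sigma + A_j^T \eta \bigr).
\]
Consequently the kernel identity above is equivalent to
\[
\mascF _1 a_1\bigl(\eta,\, \sigma + A_1^T \eta \bigr) = \mascF _1 a_2\bigl(\eta,\, \sigma + A_2^T \eta \bigr)
\quad \text{for all } \eta,\sigma\in\rr d.
\]

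Finally I would interpret the shear as a Fourier multiplier. Since $D_x$ and $D_\xi$ act on the Fourier side as multiplication by the respective dual variables, and $\scal{A^T \eta}{\sigma} = \scal{A\sigma}{\eta}$, a short computation using $\mascF _1 = \mascF _2^{-1}\mascF$ gives
\[
\mascF _1 a_j\bigl(\eta,\, \sigma + A_j^T \eta \bigr) = \mascF _1\bigl(e^{i\scal{A_j D_\xi}{D_x}} a_j\bigr)(\eta,\sigma).
\]
Inverting $\mascF _1$, which is a homeomorphism on $\maclS _{1/2}'(\rr {2d})$, yields \eqref{calculitransform}. The main technical point is making rigorous sense of the Fourier multiplier $e^{i\scal{A D_\xi}{D_x}}$ on $\maclS _{1/2}'(\rr {2d})$: its Fourier symbol $(\eta,\sigma)\mapsto e^{i\scal{A\sigma}{\eta}}$ is smooth with derivatives of polynomial growth, hence a pointwise multiplier on $\maclS _{1/2}(\rr {2d})$ and, by transposition, on $\maclS _{1/2}'(\rr {2d})$. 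All identities above are first verified for $a_j\in\maclS _{1/2}(\rr {2d})$ by direct computation with Fubini, and then extended to $\maclS _{1/2}'(\rr {2d})$ by continuity using the homeomorphism properties in \eqref{homeoF2tmap} and the invariance of Gelfand--Shilov distribution spaces under $\mascF$.
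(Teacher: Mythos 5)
Your overall route is the standard one (the paper itself gives no proof of Proposition \ref{Prop:CalculiTransfer}, citing \cite[Proposition~1.1]{To14}, and that proof runs along the same lines): reduce the operator identity to equality of the kernels \eqref{atkernel}, apply the Fourier transform to turn the affine substitution $x\mapsto x-A(x-y)$ into the shear $\sigma \mapsto \sigma + A^{T}\eta$ in the dual variables, and recognize the shear as the Fourier multiplier $e^{i\scal{A\sigma}{\eta}}$, i.e. the operator $e^{i\scal{AD_\xi}{D_x}}$. Your computations are correct, including the signs (for $A_j=t_jI$ one recovers the classical change-of-quantization formula), and the reduction to kernels is legitimate since the span of tensor products $\phi\otimes\psi$ is dense in $\maclS_{1/2}(\rr{2d})$ and $a\mapsto K_{a,A}$ is bijective on $\maclS_{1/2}'(\rr{2d})$ by \eqref{homeoF2tmap}.

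There is, however, a flaw precisely at what you yourself call the main technical point. The claim that a smooth function with polynomially growing derivatives is a pointwise multiplier on $\maclS_{1/2}(\rr{2d})$ is false: that criterion characterizes multipliers of $\mascS$, not of the quasi-analytic space $\maclS_{1/2}$. For instance, a compactly supported smooth cut-off has polynomially bounded derivatives of every order, yet multiplying a nonzero element of $\maclS_{1/2}$ by it produces a compactly supported function, which cannot lie in $\maclS_{1/2}$. What saves your argument is the specific quadratic-phase structure of $e^{i\scal{A\sigma}{\eta}}$: its derivatives satisfy the borderline Gevrey-$\tfrac12$ bounds $|\partial^{\gamma}e^{i\scal{A\sigma}{\eta}}|\lesssim C^{|\gamma|}(\gamma !)^{1/2}(1+|\eta|+|\sigma|)^{|\gamma|}$, and combining these with the $\maclS_{1/2,h}$ seminorm estimates (absorbing the polynomial factors into the weight and using $(m+n)!\le 2^{m+n}m!\,n!$) shows that multiplication by $e^{i\scal{A\sigma}{\eta}}$ maps $\maclS_{1/2,h}$ into some $\maclS_{1/2,h'}$, hence is continuous on $\maclS_{1/2}$ and, by duality, on $\maclS_{1/2}'$; alternatively one can invoke the invariance of $\maclS_{1/2}$ and $\maclS_{1/2}'$ under metaplectic operators, of which multiplication by a real quadratic chirp is an instance. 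Note that the exponent $1/2$ is critical here: for phases of degree higher than two (or for general elements of $\mathcal O_M$) the step would genuinely fail, so the justification must use the quadratic form structure rather than the polynomial-growth criterion you stated. With that repair the proof is complete.
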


\par

\subsection{Modulation spaces}\label{subsec1.2}

\par

Let $\phi \in \maclS _{1/2}(\rr d)\setminus 0$,  $p,q\in (0,\infty ]$
and $\omega \in \mascP _E(\rr {2d})$.
The \emph{modulation space} $M^{p,q}_{(\omega )}(\rr d)$
is the set of all $f\in \maclS _{1/2}'(\rr d)$ such that $V_\phi f\in
L^{p,q}_{(\omega )}(\rr {2d})$, and $M^{p,q}_{(\omega )}(\rr d)$ is
equipped with the quasi-norm
\begin{equation}\label{modnorm2}
f\mapsto \nm f{M^{p,q}_{(\omega )}}\equiv \nm {V_\phi f}{L^{p,q}_{(\omega )}}.
\end{equation}

\par

On the even-dimensional phase space $\rr {2d}$ one may define
modulation spaces based on the symplectic STFT. 
Thus if $\omega \in \mascP _E (\rr {4d})$, $p,q\in (0,\infty ]$
and $\Phi \in \maclS _{1/2} (\rr {2d})\setminus 0$ are fixed, 
the \emph{symplectic modulation space}
$\splM ^{p,q}_{(\omega )}(\rr {2d})$ is obtained by replacing
the STFT $a\mapsto V_\Phi a$ by the 
symplectic STFT $a\mapsto \maclV _\Phi a$ in
\eqref{modnorm2}.
It holds (cf. \cite{Cordero1})
$$
\splM ^{p,q}_{(\omega )}(\rr {2d}) = M ^{p,q}_{(\omega _0)}(\rr {2d}),
\quad \omega(x,\xi, y, \eta) = \omega_0 (x,\xi, -2 \eta, 2 y)
$$
so all properties that are valid for $M ^{p,q}_{(\omega )}$
carry over to $\splM ^{p,q}_{(\omega )}$.

\par

In the following propositions we list some properties of modulation spaces
and refer to \cite{GaSa,Fei1,FG1,FG2,Gc2,Toft5} for proofs.

\par

\begin{prop}\label{p1.4A}
Let $p,q \in (0,\infty ]$. 
\begin{enumerate}
\item[{\rm{(1)}}] If $\omega \in \mascP  _{E}(\rr {2d})$ then $\Sigma _1(\rr d)
\subseteq M^{p,q}_{(\omega )}(\rr d) \subseteq \Sigma _1'(\rr d)$.

\vrum

\item[{\rm{(2)}}] If $\omega \in \mascP  _{E}(\rr {2d})$ satisfies \eqref{WeightExpEst}
for every $r >0$, then
$\maclS _1(\rr d)\subseteq M^{p,q}_{(\omega )}(\rr d)
\subseteq \maclS _1'(\rr d)$.

\vrum

\item[{\rm{(3)}}]  If $\omega \in \mascP (\rr {2d})$ then
$\mathscr S(\rr d)\subseteq M^{p,q}_{(\omega )}(\rr d)
\subseteq \mathscr S '(\rr d)$.
\end{enumerate}
\end{prop}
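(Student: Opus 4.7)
The plan is to reduce each of the three inclusions to estimates on the STFT, exploiting two standard facts: (i) the STFT characterization of $\Sigma_1$, $\maclS_1$ and $\mascS$, which states that $f$ lies in the space iff $V_\phi f$ enjoys, respectively, super-exponential decay of every rate, super-exponential decay of some rate, or rapid (polynomial) decay on $\rr{2d}$; and (ii) the window independence of the modulation-space quasi-norm, that is, $\nm{V_\phi f}{L^{p,q}_{(\omega)}} \asymp \nm{V_{\phi_0} f}{L^{p,q}_{(\omega)}}$ whenever both windows lie in an appropriate subspace. Fact (ii) is proved in the Banach range in \cite{Gc2} and extended to the quasi-Banach range in \cite{GaSa,Toft5}, and fact (i) in \cite{ChuChuKim,GrZi,Toft8}, so I would simply cite these at the outset.

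For the lower inclusions I would fix a nontrivial window $\phi$ in the smallest relevant test function space and estimate $V_\phi f$ against the weight. For part (1), take $\phi \in \Sigma_1(\rr d)\setminus 0$ and $f\in \Sigma_1(\rr d)$; then for every $r>0$ we have $|V_\phi f(x,\xi)| \lesssim e^{-r(|x|+|\xi|)}$, while \eqref{WeightExpEst} gives $\omega(x,\xi) \lesssim e^{r_0(|x|+|\xi|)}$ for some $r_0>0$. Choosing $r$ large enough, depending on $r_0$, $p$ and $q$, yields $V_\phi f \cdot \omega \in L^{p,q}(\rr{2d})$, hence $f \in M^{p,q}_{(\omega)}$. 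For part (2) the STFT of an $\maclS_1$ function only decays exponentially at some rate, but the hypothesis ``\eqref{WeightExpEst} for every $r>0$'' matches this exactly. For part (3) polynomial decay of $V_\phi f$ of arbitrary order combines with polynomial growth of $\omega \in \mascP$ in the same way.

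For the upper inclusions I would invoke the STFT inversion formula. Given $f\in M^{p,q}_{(\omega)}(\rr d)$ and $\psi$ in the relevant test function space, define the pairing
\begin{equation*}
\scal{f}{\psi} \ = \ \nmm{\phi}_{L^2}^{-2} \int_{\rr{2d}} V_\phi f(x,\xi)\, \overline{V_\phi \psi(x,\xi)}\, dx\, d\xi,
\end{equation*}
and observe that the integrand satisfies
\begin{equation*}
|V_\phi f(x,\xi)\, \overline{V_\phi \psi(x,\xi)}| \ \le \ |V_\phi f(x,\xi)|\, \omega(x,\xi)\, \cdot\, |V_\phi \psi(x,\xi)|\, \omega(x,\xi)^{-1}.
\end{equation*}
The second factor decays faster than any exponential (respectively, than some exponential or any polynomial), so in particular it lies in every $L^{r,s}(\rr{2d})$. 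In the Banach range a Hölder inequality closes the argument; in the quasi-Banach range $p,q<1$ one instead uses the elementary bound $\int |FG|\le \nm{G}{L^\infty}\int|F|$ iterated in the two variables, after absorbing enough decay of $V_\phi\psi \cdot \omega^{-1}$ to pass to an $L^{p,q}_{(\omega)}$-controlled quantity via $\|F\|_{L^1}\lesssim \|F\|_{L^{p,q}}\cdot$(weight absorbing factor), which is valid when the test window's STFT has rapid enough decay. Continuity with respect to $\psi$ in the test-function topology follows from the seminorms controlling the STFT.

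The main obstacle is the quasi-Banach regime, where duality, Hölder's inequality and complex interpolation all fail. A cleaner alternative, which I would in fact use, is to discretize via a Gabor frame adapted to $\Sigma_1$ (or $\maclS_1$, or $\mascS$): write $f$ as a Gabor expansion with coefficients in $\ell^{p,q}_{(\omega)}$ and $\psi$ with coefficients of arbitrary decay, so that the pairing becomes an $\ell^{p,q}$--$\ell^\infty$ type pairing controllable without Minkowski's inequality. This route also yields window invariance and continuity of inclusions in one stroke, and is the standard quasi-Banach strategy developed in \cite{GaSa,Toft5}.
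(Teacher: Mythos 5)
The paper itself contains no proof of Proposition \ref{p1.4A}: it is listed as a known property with a pointer to \cite{GaSa,Fei1,FG1,FG2,Gc2,Toft5}, so there is nothing in-paper to compare you against line by line. Your sketch follows exactly the route those references take: STFT decay characterizations of $\Sigma_1$, $\maclS_1$ and $\mascS$ plus the growth bounds \eqref{WeightExpEst} for the lower inclusions, and the STFT inversion formula, respectively a Gabor/sequence-space discretization in the quasi-Banach range, for the upper inclusions, with window invariance (Proposition \ref{p1.4B}) quoted where needed. As an outline this is sound and is the standard argument.

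One intermediate claim, as written, is false in the continuous setting: for $p,q<1$ there is no estimate of the form $\nm F{L^1}\lesssim \nm F{L^{p,q}}\cdot(\text{weight absorbing factor})$ for general measurable $F$ — tall spikes supported on tiny sets defeat any fixed decaying factor — and rapid decay of $V_\phi \psi\,\omega^{-1}$ cannot repair this, since what is missing is pointwise (local $L^\infty$) control of $V_\phi f$, not of the test function's STFT. Such control is available precisely because $F=V_\phi f$ lies in the range of the STFT: either one uses the reproducing inequality $|V_\phi f|\lesssim |V_\phi f|\ast |V_\phi \phi|$ (equivalently the embedding $M^{p,q}_{(\omega )}(\rr d)\subseteq M^{\infty ,\infty}_{(\omega )}(\rr d)$ of Proposition \ref{p1.4B}{\,}(2)), after which $|(f,\psi )|\le \nm{\phi}{L^2}^{-2}\nm{V_\phi f\,\omega}{L^\infty}\nm{V_\phi \psi \,\omega ^{-1}}{L^1}$ settles the upper inclusions with continuity in $\psi$; or one discretizes by a Gabor frame, where $\ell ^{p,q}\subseteq \ell ^{\infty}$ and $\ell ^{p}\subseteq \ell ^{1}$ for $p\le 1$ do hold. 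Since you explicitly elect the discretization route as what you would actually run (the strategy of \cite{GaSa,Toft5}), the gap is repaired by your own fallback; just delete or correct the continuous-variable $L^1$ claim, or replace it by the reproducing-formula/amalgam argument.
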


\par

\begin{prop}\label{p1.4B}
Let $r\in (0,1]$, $p,q,p_j,q_j\in (0,\infty ]$ and $\omega ,\omega _j,v\in
\mascP  _{E}(\rr {2d})$, $j=1,2$,
satisfy $r \le \min(p,q)$, $p _1\le p _2$, $q_1\le q_2$,  $\omega _2
\lesssim \omega _1$, and let $\omega$ be $v$-moderate. 
\begin{enumerate}
\item If $\phi \in M^r_{(v)}(\rr d)\setminus 0$ then
$f\in M^{p,q}_{(\omega )}(\rr d)$ if and only if
\eqref{modnorm2} is finite.
In particular $M^{p,q}_{(\omega )}(\rr d)$ is independent
of the choice of $\phi \in M^r_{(v)}(\rr d)\setminus 0$.
The space $M^{p,q}_{(\omega )}(\rr d)$ is a quasi-Banach
space under the quasi-norm \eqref{modnorm2}, and different
choices of $\phi$ give rise to equivalent quasi-norms.
If  $p,q\ge 1$ then $M^{p,q}_{(\omega )} (\rr d)$ is a Banach space with norm \eqref{modnorm2}. 

\vrum

\item[{\rm{(2)}}] $M^{p_1,q_1}_{(\omega _1)}(\rr d)\subseteq
M^{p_2,q_2}_{(\omega _2)}(\rr d)$.
\end{enumerate}
\end{prop}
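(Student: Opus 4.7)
The plan is to reduce both assertions to two key ingredients: a reproducing pointwise estimate for the STFT that allows one to change window, and a weighted Young-type convolution inequality for mixed-norm Lebesgue spaces adapted to the quasi-Banach setting. Once these are in hand, the remaining statements follow by standard arguments carried over from the Banach to the quasi-norm category.

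First I would establish the change-of-window inequality
\begin{equation*}
|V_\phi f(z)| \le C_{\phi, \psi} \, (|V_\psi f| * |V_\phi \psi|)(z), \qquad z \in \rr{2d},
\end{equation*}
valid whenever $\phi, \psi \in M^r_{(v)}(\rr d) \setminus 0$ with $(\psi, \phi) \ne 0$ and $f \in \maclS_{1/2}'(\rr d)$. This is a pointwise consequence of Moyal's reproducing formula for the cross-STFT, verified first on Gelfand--Shilov test functions and then extended to $\maclS_{1/2}'$ by density and sequential continuity.

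The main analytic obstacle is the weighted Young-type inequality in the quasi-Banach regime: for $0 < r \le \min(1, p, q)$ and $\omega \in \mascP_E$ that is $v$-moderate,
\begin{equation*}
\|F * G\|_{L^{p,q}_{(\omega)}(\rr{2d})} \lesssim \|F\|_{L^{p,q}_{(\omega)}(\rr{2d})} \, \|G\|_{L^r_{(v)}(\rr{2d})}.
\end{equation*}
Minkowski's integral inequality fails when $r < 1$, so I would discretize by writing $G = \sum_k G \chi_{Q_k}$ along unit cubes $Q_k$ of a $\zz{2d}$-lattice and use the $r$-subadditivity $\|\sum_k H_k\|_{L^{p,q}_{(\omega)}}^r \le \sum_k \|H_k\|_{L^{p,q}_{(\omega)}}^r$ (valid for $r \le \min(1, p, q)$) to convert the convolution into a sum of translates of $F$. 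The $v$-moderateness of $\omega$ converts the translation cost into the factor $v(k)$, and the local pieces of $G$ are then re-summed in $\ell^r$ using the classical Galperin--Samarah type estimate. Combining with the change-of-window inequality yields
\begin{equation*}
\|V_\phi f\|_{L^{p,q}_{(\omega)}} \lesssim \|V_\phi \psi\|_{L^r_{(v)}} \|V_\psi f\|_{L^{p,q}_{(\omega)}} = \|\phi\|_{M^r_{(v)}} \|V_\psi f\|_{L^{p,q}_{(\omega)}},
\end{equation*}
which by interchanging the roles of $\phi$ and $\psi$ gives the window-independence and quasi-norm equivalence in (1). Completeness follows from $V_\phi$ being an isometric embedding of $M^{p,q}_{(\omega)}(\rr d)$ into the quasi-Banach space $L^{p,q}_{(\omega)}(\rr{2d})$, whose image is closed because $L^{p,q}_{(\omega)}$-limits still satisfy the reproducing identity. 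The Banach statement for $p, q \ge 1$ is immediate.

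For (2), after absorbing the constant in $\omega_2 \lesssim \omega_1$, fix $\phi \in \Sigma_1(\rr d) \setminus 0$, admissible by Proposition \ref{p1.4A}. The change-of-window estimate with $\psi = \phi$ gives $|V_\phi f(z)| \lesssim (|V_\phi f| * H)(z)$ with $H = |V_\phi \phi| \in \Sigma_1(\rr{2d})$ rapidly decreasing. This local regularity implies that the Wiener-amalgam quasi-norm $\|(\sup_{z \in k + [0,1]^{2d}} |V_\phi f(z)|)_{k \in \zz{2d}}\|_{\ell^{p_j, q_j}_{(\omega)}}$ is equivalent to $\|V_\phi f\|_{L^{p_j, q_j}_{(\omega)}}$ for $j = 1, 2$, reducing the embedding to the elementary sequence inclusion $\ell^{p_1, q_1}_{(\omega)}(\zz{2d}) \subseteq \ell^{p_2, q_2}_{(\omega)}(\zz{2d})$ when $p_1 \le p_2$ and $q_1 \le q_2$.
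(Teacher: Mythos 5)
Your reduction hinges on the convolution estimate $\nm {F*G}{L^{p,q}_{(\omega )}}\lesssim \nm F{L^{p,q}_{(\omega )}}\, \nm G{L^r_{(v)}}$ for $0<r\le \min (1,p,q)$, and this estimate is false in the quasi-Banach range, already unweighted with $p=q=r<1$: take $F=\chi _{[0,\delta ]^{2d}}$ and $G=\chi _{[0,1]^{2d}}$; then $F*G\ge \delta ^{2d}$ on a set of measure close to $1$, so $\nm {F*G}{L^p}\gtrsim \delta ^{2d}$, whereas $\nm F{L^p}\nm G{L^r}=\delta ^{2d/p}$, and the quotient blows up as $\delta \to 0$ since $p<1$. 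The same example shows why your proposed proof of the lemma cannot be completed: after writing $G=\sum _k G\chi _{Q_k}$ and using $r$-subadditivity you still must bound $\nm {F*(G\chi _{Q_k})}{L^{p,q}_{(\omega )}}$ by a quasi-norm of $F$ times an integral quantity of $G\chi _{Q_k}$, and this is exactly the Minkowski-type step that fails when $p<1$ or $q<1$ for a general $F\in L^{p,q}_{(\omega )}$ (note that $F*\chi _{Q_k}$ is an average of translates of $F$, and averaging is not bounded on $L^p$, $p<1$); translation invariance and $v$-moderateness of $\omega$ do not repair this.

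What rescues the argument in the modulation space setting --- and this is the actual content of the Galperin--Samarah theory \cite{GaSa} (see also \cite{Toft12}) to which the paper appeals for Proposition \ref{p1.4B} --- is that the functions being convolved are not arbitrary: $F=|V_\psi f|$ and $G=|V_\phi \psi |$ carry local $L^\infty$ control coming from the reproducing formula, and the valid substitute for your lemma is a convolution estimate between Wiener amalgam spaces, of the type $W(L^\infty ,\ell ^{p,q}_{(\omega )})\ast W(L^\infty ,\ell ^r_{(v)})\subseteq W(L^\infty ,\ell ^{p,q}_{(\omega )})$, which reduces to the discrete Young-type inequality you invoke in part (2). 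Consequently, before the change-of-window step you need the nontrivial fact that $V_\psi f\in W(L^\infty ,L^{p,q}_{(\omega )})$ (equivalently the amalgam equivalence you assert in part (2)), for windows merely in $M^r_{(v)}$; this is usually established first for a Gaussian window, for which $V_\phi \phi$ has explicit decay, and then bootstrapped to general windows, precisely to avoid the circularity of using window-independence while proving it. Your sketch omits this ingredient, which is the heart of the quasi-Banach case; once it is inserted, the remaining architecture (Moyal-based change of window, completeness via the closed range of $V_\phi$, and the sequence-space inclusion $\ell ^{p_1,q_1}\subseteq \ell ^{p_2,q_2}$ for part (2)) is sound and agrees with the literature, to which the paper itself refers without proof \cite{GaSa,Fei1,FG1,FG2,Gc2,Toft5}.
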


\par

We will rely heavily on Gabor expansions
so we need the following concepts. 
The operators in Definition \ref{DefAnSynGabOps} are
well defined and continuous by the analysis in \cite[Chapters~11--14]{Gc2}. 

\par

\begin{defn}\label{DefAnSynGabOps}
Let $\Lambda \subseteq \rr d$ be a lattice,
let $\Lambda^2 = \Lambda \times \Lambda \subseteq \rr {2d}$, 
let $\omega ,v\in \mascP _E(\rr {2d})$ be such that
$\omega$ is $v$-moderate, and let $\phi ,\psi \in M^1_{(v)}(\rr d)$.
\begin{enumerate}
\item The \emph{Gabor analysis operator} $C_\phi = C^{\Lambda}_\phi$ is the operator from
$M^\infty _{(\omega )}(\rr d)$ to $\ell ^{\infty}_{(\omega)}(\Lambda^2 )$
given by
\begin{equation*}
C^\Lambda _\phi f \equiv \{ V_\phi f(j,\iota ) \} _{j,\iota \in \Lambda} \text ; 
\end{equation*}

\vrum

\item The \emph{Gabor synthesis operator} $D_\psi=D^{\Lambda}_\psi$ is the operator from
$\ell ^\infty _{(\omega )}(\Lambda^2 )$ to $M^\infty _{(\omega)}(\rr d)$
given by
\begin{equation*}
D^\Lambda _\psi c \equiv \sum _{j,\iota \in \Lambda} c(j,\iota)
\, e^{i\scal \cdo {\iota }} \psi (\cdo -j)\text ;
\end{equation*}

\vrum

\item The \emph{Gabor frame operator} $S_{\phi ,\psi}=S^{\Lambda}_{\phi ,\psi}$
is the operator on $M^\infty _{(\omega )}(\rr d)$
given by $D^\Lambda _\psi \circ
C^\Lambda _\phi$, i.{\,}e.
\begin{equation*}
S^{\Lambda}_{\phi ,\psi}f \equiv \sum _{j,\iota \in \Lambda} V_\phi f(j,\iota )
\, e^{i\scal \cdo {\iota }}\psi (\cdo -j).
\end{equation*}
\end{enumerate}
\end{defn}

\par

The following result is a consequence of \cite[Theorem~13.1.1]{Gc2} (see also 
\cite[Theorem~S]{Gc1}). 

\par

\begin{prop}\label{ThmS}
Suppose $v\in \mascP _E(\rr {2d})$ is submultiplicative, and let $\phi \in
M^1_{(v)}(\rr d)\setminus 0$.
There is a constant $\theta _0>0$ such that the Gabor frame operator
$S_{\phi ,\phi}^\Lambda$ is a homeomorphism on $M^1_{(v)}(\rr d)$
when $\Lambda =\theta \zz d$ and $\theta \in (0,\theta _0]$. 
The Gabor systems
\begin{equation}\label{DualFrames}
\{ e^{i\scal {\cdo }{\iota }}\phi (\cdo -j) \} _{(j,\iota )\in \Lambda}
\quad \text{and}\quad
\{ e^{i\scal {\cdo }{\iota }}\psi (\cdo -j) \} _{(j,\iota )\in \Lambda}
\end{equation}
are dual frames for $L^2(\rr d)$ when $\psi
= (S_{\phi ,\phi}^\Lambda )^{-1}\phi \in M^1_{(v)}(\rr d)$ and $\theta \in (0,\theta _0]$.
\end{prop}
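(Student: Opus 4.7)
My plan is to view this proposition as a direct application of the deep structural theorem of Gr\"ochenig cited from \cite{Gc2}. I would not reprove that theorem; rather I would unpack its conclusions and check that they are exactly what the proposition asserts. The key inputs are two: (i) when $\phi \in M^1_{(v)}(\rr d)\setminus 0$ and the lattice $\Lambda^2 = \theta\zz d \times \theta\zz d$ is sufficiently fine, the time--frequency shifts $\{ e^{i\scal{\cdo}{\iota}} \phi(\cdo - j)\}_{(j,\iota)\in \Lambda^2}$ form a frame for $L^2(\rr d)$; and (ii) the frame operator $S^{\Lambda}_{\phi,\phi}$, which is bounded and invertible on $L^2(\rr d)$ by (i), remains invertible as an operator on $M^1_{(v)}(\rr d)$.

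The first step is to invoke the density/sampling result underlying \cite[Theorem~13.1.1]{Gc2}: since $\phi \in M^1_{(v)}(\rr d) \subseteq M^1(\rr d)$ has an absolutely summable STFT, a standard Walnut-type estimate shows that $S^{\Lambda}_{\phi,\phi}$ is norm-close to a multiple of the identity on $L^2(\rr d)$ when the lattice spacing $\theta$ is small enough. A Neumann series argument then produces $\theta_0 > 0$ such that $S^{\Lambda}_{\phi,\phi}$ is bounded and invertible on $L^2(\rr d)$ for all $\theta \in (0,\theta_0]$. This is the content of \cite[Theorem~S]{Gc1} in the unweighted case.

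The heart of the matter, and what I expect to be the main obstacle, is to promote invertibility from $L^2(\rr d)$ to $M^1_{(v)}(\rr d)$. The key observation is that $S^{\Lambda}_{\phi,\phi}$ commutes with the lattice time--frequency shifts, so modulo this shift structure it corresponds to a twisted convolution operator on $\ell^1_{(v)}(\Lambda^2)$. Gr\"ochenig's Wiener-type lemma for twisted convolution (which is precisely the deep content of \cite[Theorem~13.1.1]{Gc2}) then ensures that the inverse of $S^{\Lambda}_{\phi,\phi}$ on $L^2(\rr d)$ preserves the off-diagonal decay needed to leave $M^1_{(v)}(\rr d)$ invariant. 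Consequently $S^{\Lambda}_{\phi,\phi}$ is a homeomorphism on $M^1_{(v)}(\rr d)$.

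Finally, defining $\psi = (S^{\Lambda}_{\phi,\phi})^{-1}\phi$, the previous step gives $\psi \in M^1_{(v)}(\rr d)$. The reconstruction identity
\begin{equation*}
f = S^{\Lambda}_{\phi,\phi} (S^{\Lambda}_{\phi,\phi})^{-1} f
= \sum_{(j,\iota) \in \Lambda^2} V_\phi f(j,\iota) \, e^{i\scal{\cdo}{\iota}} \psi(\cdo - j),
\end{equation*}
valid on $L^2(\rr d)$ by the $L^2$ frame property, together with the analogous identity obtained by interchanging the roles of $\phi$ and $\psi$ (using the fact that $S^{\Lambda}_{\phi,\phi}$ and $(S^{\Lambda}_{\phi,\phi})^{-1}$ commute with all lattice shifts and therefore with each other's intertwinings), yields that the two Gabor systems in \eqref{DualFrames} are dual frames for $L^2(\rr d)$. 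This completes the verification.
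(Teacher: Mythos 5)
Your proposal follows essentially the same route as the paper, which offers no independent argument but simply cites \cite[Theorem~S]{Gc1} for the existence of $\theta_0$ (the $L^2$ frame property at sufficiently fine lattices) and \cite[Theorem~13.1.1]{Gc2} for the invertibility of $S_{\phi,\phi}^\Lambda$ on $M^1_{(v)}$ and the membership $\psi=(S_{\phi,\phi}^\Lambda)^{-1}\phi\in M^1_{(v)}$; your sketch just unpacks those citations correctly. The only cosmetic point is that your displayed chain $f=S_{\phi,\phi}^\Lambda(S_{\phi,\phi}^\Lambda)^{-1}f=\sum V_\phi f(j,\iota)\,e^{i\scal{\cdo}{\iota}}\psi(\cdo-j)$ tacitly uses the commutation of $(S_{\phi,\phi}^\Lambda)^{-1}$ with the lattice time--frequency shifts, which you do acknowledge immediately afterwards.
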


\par

Let $v$, $\phi$ and $\Lambda$ be as in Proposition \ref{ThmS}. Then
$(S_{\phi ,\phi}^\Lambda )^{-1}\phi$
is called the \emph{canonical dual window of  $\phi$}, with respect to
$\Lambda$. We have
\begin{equation*}
S_{\phi ,\phi}^\Lambda (e^{i\scal \cdo {\iota }}f(\cdo -j)) =
e^{i\scal \cdo {\iota }}(S_{\phi ,\phi}^\Lambda f)(\cdo -j),
\end{equation*}
when $f\in M^\infty _{(1/v)}(\rr d)$ and $(j,\iota )\in \Lambda$.

\par

The next result concerns Gabor expansion of modulation spaces. 
It is a special case of \cite[Theorem 3.7]{Toft12} (see also
\cite[Corollaries~12.2.5 and 12.2.6]{Gc2} and \cite[Theorem~3.7]{GaSa}).

\par

\begin{prop}\label{ConseqThmS}
Let $\theta > 0$, $\Lambda = \theta \zz {d}$,
\begin{equation*}
\Lambda ^2=\Lambda \times \Lambda = \{ (j,\iota  )\} _{j,\iota \in \Lambda} \subseteq \rr {2d},
\end{equation*}
let $p,q, r  \in (0,\infty ]$ satisfy $r\le \min (1, p,q)$, and let
$\omega ,v\in \mascP _E(\rr {2d})$ be such that $\omega$ is
$v$-moderate. 
Suppose $\phi ,\psi \in M^r_{(v)}(\rr d)$ are such that \eqref{DualFrames}
are dual frames for $L^2(\rr d)$. Then the following is true:
\begin{enumerate}
\item 
The operators
\begin{equation*}
C^{\Lambda}_\phi: M^{p,q}_{(\omega)} (\rr d) \mapsto \ell^{p,q}_{(\omega)} (\Lambda^2)
\quad \text{and}\quad
D^{\Lambda}_\psi: \ell^{p,q}_{(\omega)} (\Lambda^2) \mapsto M^{p,q}_{(\omega)} (\rr d) 
\end{equation*}
are continuous.

\item The operators $S_{\phi ,\psi} \equiv D_\psi \circ C_\phi$ and
$S_{\psi ,\phi} \equiv D_\phi \circ C_\psi$ are both the identity map
on $M^{p,q} _{(\omega )}(\rr d)$, and if  $f\in M^{p,q}
_{(\omega )}(\rr d)$, then
\begin{equation}\label{GabExpForm}
\begin{aligned}
f & = \sum _{j,\iota \in \Lambda} V_\phi f (j,\iota )
\, e^{i\scal {\cdo }{\iota }}\psi (\cdo -j)
\\[1 ex]
& =
\sum _{j,\iota \in \Lambda }  V_\psi f (j,\iota )
\, e^{i\scal {\cdo }{\iota }}\phi (\cdo -j)
\end{aligned}
\end{equation}
with unconditional quasi-norm convergence in $M^{p,q} _{(\omega )}$
when $p,q <\infty$, and with convergence in
$M^\infty _{(\omega)}$ with respect to the weak$^*$ topology otherwise.

\item If  $f\in M^\infty _{(1/v)}(\rr d)$, then
\begin{equation*}
\displaystyle{\nm f{M^{p,q} _{(\omega )}}
\asymp
\nm {V_\phi f}
{\ell ^{p,q} _{(\omega )} (\Lambda ^2) }
\asymp
\nm {V_\psi f}{\ell ^{p,q} _{(\omega )} (\Lambda ^2)} }.
\end{equation*}
\end{enumerate}
\end{prop}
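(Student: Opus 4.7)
The plan is to establish (1), (2), (3) in that order. This is essentially a special case of \cite[Theorem~3.7]{Toft12}, but I sketch the argument for completeness. First, I note that Proposition \ref{ThmS} applies to $\phi$ since $\phi \in M^r_{(v)} \subseteq M^1_{(v)}$ by Proposition \ref{p1.4B}(2); the hypothesis that $\phi$ and $\psi$ yield dual frames for $L^2(\rr d)$ therefore provides the $L^2$ reconstruction identity as our starting point.

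The principal tool for quasi-Banach estimates is the elementary inequality $|a+b|^r \le |a|^r + |b|^r$ for $r \le 1$, which reduces the target estimates to Banach-space estimates in $L^{p/r,q/r}_{(\omega^r)}$ and $\ell^{p/r,q/r}_{(\omega^r)}$ (both genuine Banach spaces since $p/r, q/r \ge 1$). The supporting ingredient is the reproducing-formula pointwise bound
\begin{equation*}
|V_\phi f(X)| \le \nm{\phi}{L^2}^{-2}\bigl(|V_\phi f| * |V_\phi \phi|\bigr)(X),\qquad X \in \rr {2d},
\end{equation*}
together with the fact that $\phi \in M^r_{(v)}$ forces $|V_\phi \phi| \in L^r_{(v)}(\rr {2d})$.

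For the continuity of $C^\Lambda_\phi$, one uses a discretization estimate: for $Q = \theta[0,1)^{2d}$ a fundamental domain of $\Lambda^2 = \theta \zz {2d}$, one obtains
\begin{equation*}
\sum_{(j,\iota) \in \Lambda^2} |V_\phi f(j,\iota)|^r \chi_{Q+(j,\iota)}(X) \lesssim \bigl(|V_\phi f|^r * H\bigr)(X),
\end{equation*}
where $H \in L^1_{(v^r)}(\rr {2d})$ depends only on $\phi$ and $\theta$; this is extracted from the reproducing bound by comparing samples with local suprema and raising to the $r$-th power. Taking $L^{p/r,q/r}_{(\omega^r)}$-norms and applying Young's inequality bounds the left-hand side by $\nm{f}{M^{p,q}_{(\omega)}}^r$. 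For the continuity of $D^\Lambda_\psi$, the STFT covariance $|V_\phi(e^{i\scal \cdo \iota}\psi(\cdo - j))(X)| = (2\pi)^{-d/2}|V_\phi \psi(X-(j,\iota))|$ combined with $|a+b|^r \le |a|^r + |b|^r$ yields, for finitely supported $c$,
\begin{equation*}
|V_\phi(D^\Lambda_\psi c)(X)|^r \le (2\pi)^{-dr/2} \sum_{j,\iota \in \Lambda} |c(j,\iota)|^r |V_\phi \psi(X-(j,\iota))|^r,
\end{equation*}
and Young's inequality between $\ell^{p/r,q/r}_{(\omega^r)}(\Lambda^2)$ and $L^1_{(v^r)}(\rr {2d})$ completes the bound.

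Part (2) follows since the $L^2$ frame identity holds on the dense subspace $\Sigma_1(\rr d) \subseteq M^{p,q}_{(\omega)}(\rr d)$ (Proposition \ref{p1.4A}(1)), and the just-established continuity of $S_{\phi,\psi} = D^\Lambda_\psi \circ C^\Lambda_\phi$ extends the identity to all of $M^{p,q}_{(\omega)}$ when $p, q < \infty$. Part (3) is then immediate from the sandwich $\nm{f}{M^{p,q}_{(\omega)}} \lesssim \nm{C^\Lambda_\phi f}{\ell^{p,q}_{(\omega)}} \lesssim \nm{f}{M^{p,q}_{(\omega)}}$. The main technical obstacle is the endpoint regime $p = \infty$ or $q = \infty$, where $\Sigma_1$ is not norm-dense in $M^{p,q}_{(\omega)}$; there one relies on weak$^*$ convergence of Gabor partial sums in $\maclS_{1/2}'(\rr d)$, verified by testing against Gelfand--Shilov functions and exploiting uniform boundedness of the partial sums in $M^\infty_{(\omega)}$.
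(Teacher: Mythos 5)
The paper itself offers no proof of this proposition: it is quoted as a special case of \cite[Theorem~3.7]{Toft12} (see also \cite[Corollaries~12.2.5--12.2.6]{Gc2} and \cite[Theorem~3.7]{GaSa}). Your sketch reconstructs precisely the standard Galperin--Samarah/Toft argument underlying those references: reduction of the quasi-norm estimates via $|a+b|^r\le |a|^r+|b|^r$ to Banach estimates with exponents $p/r,q/r\ge 1$, control of the samples of $V_\phi f$ through the reproducing formula and local suprema, treatment of $D^\Lambda_\psi$ through the covariance of the STFT, then (2) by density of nice functions for $p,q<\infty$ and weak$^*$ arguments at the endpoints, and (3) by the sandwich estimate. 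So in substance you follow the same route as the sources the paper relies on.

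One step needs tightening. In the bound for $D^\Lambda_\psi$ you invoke ``Young's inequality between $\ell^{p/r,q/r}_{(\omega^r)}(\Lambda^2)$ and $L^1_{(v^r)}(\rr{2d})$'' for the semi-discrete sum $\sum_{j,\iota}|c(j,\iota)|^r\,|V_\phi\psi(\,\cdot\,-(j,\iota))|^r$. For lattice sums of translates, $L^1$ control of the translated function is not sufficient (already for $p=\infty$ the periodization of an $L^1$ function need not be bounded); what is needed is the Wiener-amalgam norm $W(L^\infty,L^1_{(v^r)})$ of $|V_\phi\psi|^r$, i.e.\ an $\ell^1_{(v^r)}$ bound on its local suprema over the cubes $Q+(j,\iota)$. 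This is available under the hypotheses, since $\phi,\psi\in M^r_{(v)}$ gives $V_\phi\psi\in W(L^\infty,L^r_{(v)})$ by the same local-suprema argument you already use for $C^\Lambda_\phi$, so the gap closes in one line, but as literally stated that Young step would fail. (The factor $(2\pi)^{-d/2}$ you insert in the covariance formula is immaterial.) Finally, for (3) note that the hypothesis is only $f\in M^\infty_{(1/v)}(\rr d)$, so the sandwich must be combined with the reconstruction $f=D_\psi C_\phi f$ valid on $M^\infty_{(1/v)}$ (the $p=q=\infty$, weight $1/v$ instance of (2)), which guarantees that finiteness of the sequence norm already forces $f\in M^{p,q}_{(\omega)}(\rr d)$; your weak$^*$ endpoint discussion supplies exactly this, but it is worth saying explicitly.
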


\par 

The series \eqref{GabExpForm}
are called \emph{Gabor expansions of $f$} with respect to $\phi$, $\psi$ and $\Lambda$.

\par

\begin{rem}\label{RemThmS}
There are many ways to achieve dual frames
\eqref{DualFrames} satisfying the required properties in
Proposition \ref{ConseqThmS}. In fact, let
$v,v_{0}\in \mascP _E(\rr {2d})$ be submultiplicative such that
$\omega $ is $v$-moderate and 
\begin{equation*}
L^1_{(v_0)}(\rr {2d}) \subseteq \bigcap_{0 < r \le 1} L^r(\rr {2d}). 
\end{equation*}
This inclusion is satisfied e.g. for $v_0 (x) = e^{\ep|x|}$ with $\ep>0$. 
Proposition \ref{ThmS} guarantees that
for some choice of $\phi ,\psi \in M^1_{(v_0v)}(\rr d)\subseteq
\bigcap_{0 < r \le 1} M^r_{(v)}(\rr d)$ and lattice $\Lambda \subseteq \rr d$, the sets in
\eqref{DualFrames} where $\psi = (S_{\phi ,\phi}^\Lambda )^{-1}\phi$, are dual frames.
\end{rem}

\par

We usually assume that $\Lambda =\theta \zz d$,
with $\theta >0$ small enough
to guarantee the hypotheses in Propositions \ref{ThmS} and \ref{ConseqThmS}
be fulfilled, and that the window function and its
dual belong to $M^r_{(v)}$ for every $r>0$. This
is always possible, in view of Remark \ref{RemThmS}.

\par

We need the following version of Proposition \ref{ThmS},
which is a consequence of \cite[Corollary~3.2]{CaTo} and the Fourier
invariance of $\Sigma _1(\rr {2d})$. 

\par

\begin{lemma}\label{aFrames}
Suppose $v\in \mascP _E(\rr {4d})$ is submultiplicative, let $\phi _1,\phi _2 \in \Sigma _1(\rr d)\setminus 0$,
and let
\begin{equation}\label{Eq:Rihaczek}
\Phi (x,\xi )=\phi _1(x) \, \overline {\widehat \phi _2(\xi )} \, e^{-i\scal x\xi }.
\end{equation}
Then there is a
lattice $\Lambda^2 \subseteq \rr {2d}$ such that
\begin{align}
&\{ \Phi (x-j,\xi -\iota )e^{i(\scal x\kappa +\scal k\xi )} \}
_{(j, \iota ),(k, \kappa ) \in \Lambda^2} \label{Eq:rihaczekframe}
\intertext{is a Gabor frame for $L^2(\rr {2d})$ with canonical dual frame}
&\{ \Psi (x-j,\xi -\iota )e^{i(\scal x\kappa +\scal k\xi )} \} _{(j,\iota ),(k,\kappa )
\in \Lambda^2}, \nonumber
\end{align}
and 
\begin{equation*}
\Psi = (S_{\Phi ,\Phi }^{\Lambda^2})^{-1}\Phi \in \bigcap_{r>0} M^r _{(v)}(\rr {2d}).
\end{equation*}
\end{lemma}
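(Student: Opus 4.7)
The plan is to recognise $\Phi$ as a nonzero element of $\Sigma_1(\rr{2d})$ and then invoke \cite[Corollary~3.2]{CaTo}, which, for any such window on $\rr{2d}$, produces a lattice yielding a Gabor frame on $L^2(\rr{2d})$ whose canonical dual window is again in $\Sigma_1(\rr{2d})$. Combined with the inclusion $\Sigma_1(\rr{2d}) \subseteq \bigcap_{r>0} M^r_{(v)}(\rr{2d})$, valid for every $v\in \mascP_E(\rr{4d})$ by Proposition \ref{p1.4A}(1), this delivers both the frame property and the regularity of $\Psi$ claimed in the lemma.

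The preparation consists of two steps. First, $\Sigma_1(\rr d)$ is invariant under the Fourier transform and under complex conjugation, so $\overline{\widehat\phi_2}\in \Sigma_1(\rr d)\setminus 0$, and the tensor product $(x,\xi)\mapsto \phi_1(x)\,\overline{\widehat\phi_2(\xi)}$ lies in $\Sigma_1(\rr{2d})\setminus 0$. Second, I would show that multiplication by the chirp $e^{-i\scal x\xi}$ preserves $\Sigma_1(\rr{2d})$. By Leibniz, $\partial_x^\alpha \partial_\xi^\beta \Phi$ decomposes into a finite sum of terms of the form
\begin{equation*}
c_{\alpha',\beta'}\,(\partial_x^{\alpha'}\phi_1)(x)\,\overline{(\partial_\xi^{\beta'}\widehat\phi_2)(\xi)}\, \xi^{\alpha-\alpha'}\,x^{\beta-\beta'}\,e^{-i\scal x\xi};
\end{equation*}
the polynomial factors $\xi^{\alpha-\alpha'}$ and $x^{\beta-\beta'}$ are absorbed by the ultra-rapid decay of $\phi_1$ and $\widehat\phi_2$, respectively, which yields the defining $\Sigma_1$ seminorm bounds uniformly for every $h>0$. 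Equivalently, multiplication by $e^{-i\scal x\xi}$ on $\rr{2d}$ is a metaplectic operator, and $\Sigma_1$ is invariant under the metaplectic representation.

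With $\Phi \in \Sigma_1(\rr{2d})\setminus 0$ in hand, \cite[Corollary~3.2]{CaTo} furnishes a lattice $\Lambda^2\subseteq\rr{2d}$ for which \eqref{Eq:rihaczekframe} is a Gabor frame for $L^2(\rr{2d})$, and the canonical dual window $\Psi = (S^{\Lambda^2}_{\Phi,\Phi})^{-1}\Phi$ lies in $\Sigma_1(\rr{2d})$. Proposition \ref{p1.4A}(1) then places $\Psi$ inside $M^r_{(v)}(\rr{2d})$ for every $r>0$ and every $v\in\mascP_E(\rr{4d})$, completing the proof.

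The main obstacle, although conceptually mild, is the chirp-absorption step: one must check the $\Sigma_1$ seminorm estimates with constants controlled uniformly over all multi-indices, either by carrying out the Leibniz calculation above with the Gelfand--Shilov bounds on $\phi_1$ and $\widehat\phi_2$ inserted explicitly, or by identifying the chirp multiplication as a metaplectic transformation and invoking its known preservation of $\Sigma_1$. Once this is secured, the rest of the argument is an immediate application of the cited corollary and the embeddings of $\Sigma_1$ into modulation spaces.
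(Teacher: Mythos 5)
Your proposal takes essentially the paper's own route: the paper disposes of this lemma in one line, observing that it follows from \cite[Corollary~3.2]{CaTo} together with the (partial) Fourier invariance of $\Sigma_1(\rr {2d})$, which is exactly your reduction once $\Phi \in \Sigma_1(\rr {2d})\setminus 0$ is checked (your Leibniz/metaplectic argument for absorbing the chirp is a fine way to do this). The only caveat is that you credit \cite[Corollary~3.2]{CaTo} with producing a canonical dual window in $\Sigma_1(\rr {2d})$, whereas the paper extracts from it only $\Psi \in \bigcap_{r>0} M^r_{(v)}(\rr {2d})$ and attributes the sharper $\Sigma_1$-type conclusion to \cite[Lemma~2]{Kostadinova1}; since the lemma claims only the modulation-space membership, your argument still reaches the stated conclusion, with your final embedding step $\Sigma_1 \subseteq M^r_{(v)}$ simply becoming superfluous.
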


\par

The right-hand side of \eqref{Eq:Rihaczek} is called the cross-Rihaczek
distribution of $\phi _1$ and $\phi _2$ \cite{Gc2}.

\par

\begin{rem}
The last conclusion in Lemma \ref{aFrames} is a consequence of the sharper result 
\cite[Lemma~2]{Kostadinova1}.
\end{rem}

\par

\subsection{Pseudodifferential operators and Gabor analysis \cite{To13}}

\par

In order to discuss a reformulation of pseudodifferential operators by means of  Gabor analysis, we need the following matrix concepts.

\par

\begin{defn}\label{Def:matrix classes}
Let $p,q\in (0,\infty ]$, $\theta >0$, let $J$ be an index set and let
$\Lambda  =\theta \zz d$ be a lattice, and let $\omega \in \mascP _E(\rr {2d})$.
\begin{enumerate}
\item $\mathbb U _0'(J)$ is the set of all matrices
$A=(\mabfa (j,k))_{j,k\in J}$ with entries in $\co$;

\item $\mathbb U _0(J)$ is the set of all
$A=(\mabfa (j,k))_{j,k\in J}\in \mathbb U_0'(J)$ such that $\mabfa (j,k)
\neq 0$ for at most finitely many $(j,k)\in J\times J$;
\item if $A=(\mabfa (j,k))_{j,k\in \Lambda}\in \mathbb U_0'(\Lambda )$ then
\begin{equation}\label{Hhdef}
H_{A,\omega}(j,k) = \mabfa (j,j-k) \, \omega (j,j-k)
\quad \text{and}\quad
h_{A,p,\omega}(k) = \nm {H_{A,\omega}(\cdo ,k)}{\ell ^p}.
\end{equation}
The set $\mathbb U ^{p,q}(\omega ,\Lambda )$ consists of all matrices
$A=(\mabfa (j,k))_{j,k\in \Lambda}$ such that
\begin{equation}\label{Upqnorm}
\nm {(\mabfa (j,k))_{j,k\in \Lambda}}{\mathbb U ^{p,q}(\omega ,\Lambda )}
\equiv \nm {h_{A,p,\omega}}{\ell ^q}
\end{equation}
is finite.
\end{enumerate}
\end{defn}

\par

$\mathbb U ^{p,q}(\omega ,\Lambda )$ is a quasi-Banach space, and if $p,q\ge 1$
it is a Banach space.

\par

If $J$ is an index set then $A=(\mabfa (j,k))_{j,k\in J}
\in \mathbb U_0'(J)$ is called properly supported if the sets
$$
\sets {j\in J }{\mabfa (j,k_0) \neq 0}
\quad \text{and}\quad
\sets {k\in J }{\mabfa (j_0,k) \neq 0}
$$
are finite for every $j_0,k_0\in J$. 
The set of properly supported matrices is denoted $\mathbb U_{\operatorname p}(J)$, 
and evidently $\mathbb U_0(J) \subseteq \mathbb U_{\operatorname p}(J)$. 
The sets
$\mathbb U_0(J)$ and $\mathbb U_{\operatorname p}(J)$ are rings under
matrix multiplication, and
$\mathbb U_0'(J)$ is a $\mathbb U_{\operatorname p}(J)$-module
with respect to matrix multiplication.

\par

Let $\phi _1,\phi _2\in \Sigma _1(\rr d)\setminus 0$, let $\Phi$ be defined by
\eqref{Eq:Rihaczek}, 
let $\Lambda \subseteq \rr {d}$ be a lattice such that
$\Lambda ^2 = \Lambda \times \Lambda \subseteq \rr {2d}$
makes \eqref{Eq:rihaczekframe} a Gabor frame in accordance with Lemma \ref{aFrames}, 
and let $\Psi = (S_{\Phi ,\Phi }^{\Lambda^2})^{-1}\Phi$ be the canonical dual window of $\Phi$. 
Suppose $\omega _0\in \mascP _E(\rr {4d})$ and set
\begin{equation}\label{Eq:omega0omegaRel}
\omega (x,\xi ,y,\eta ) = \omega _0(x,\eta ,\xi -\eta ,y-x). 
\end{equation}
Let $a \in M^{p,q}_{(\omega _0)}(\rr {2d})$, define
\begin{multline}\label{Eq:SpecMatrixEl}
\mabfa (\mabfj ,\mabfk ) = V_\Psi a (j,\kappa ,\iota -\kappa, k -j) \, e^{i\scal {k-j}\kappa},
\\[1ex]
\text{where}\quad \mabfj = (j,\iota )\in \Lambda^2 \quad \text{and}\quad
\mabfk = (k,\kappa)\in \Lambda^2,
\end{multline}
and define the matrix 
\begin{equation*}
A = (\mabfa (\mabfj ,\mabfk ))_{\mabfj ,\mabfk \in \Lambda ^2}.
\end{equation*}
Then it follows from Propositions \ref{ThmS} and \ref{ConseqThmS} that
\begin{equation}\label{Eq:SymbMatrixEstLink}
\nm a{M^{p,q}_{(\omega _0)}} \asymp
\nm {A}
{\mathbb U ^{p,q}(\omega ,\Lambda ^2 )} 
\end{equation}
provided $\theta$ is sufficiently small. 

\par

By identifying matrices with corresponding linear operators,
\cite[Lemma~3.3]{To13} gives
\begin{equation}\label{Eq:OpMatrixLink}
\op (a) = D_{\phi _1}\circ A \circ C_{\phi _2}. 
\end{equation}
Hence, if $b \in \maclS _{1/2} (\rr {2d})$, 
\begin{equation*}
\mabfb (\mabfj ,\mabfk ) = V_\Psi b (j,\kappa ,\iota -\kappa, k -j) \,
e^{i\scal {k-j}\kappa}, \quad \mabfj, \mabfk \in \Lambda ^2,
\end{equation*} 
\begin{equation*}
B = (\mabfb (\mabfj ,\mabfk ))_{\mabfj ,\mabfk \in \Lambda ^2},
\end{equation*}
and the matrix $C$ is defined as
\begin{equation}\label{mellanmatris}
C = C_{\phi _2}\circ D_{\phi _1} 
\end{equation}
then
\begin{equation}\label{KNcomp}
\op (a\wpr _0b) = \op (a)\circ \op (b) = D_{\phi _1}\circ (A \circ C \circ B) \circ C_{\phi _2}
\end{equation}
and
\begin{equation}\label{KNprodnorm}
\nm {a\wpr _0b}{M^{p,q}_{(\omega _0)}} \asymp
\nm {A\circ C\circ B}{\mathbb U ^{p,q}(\omega ,\Lambda ^2 )}.
\end{equation}

\par

\subsection{Composition of pseudodifferential operators with
symbols in Banach modulation spaces}

\par

We recall algebraic results for pseudodifferential
operators with symbols in modulation spaces with Lebesgue
exponents not smaller than one \cite{Cordero1,Holst1,To14}.

\par

If $A\in \M(d,\mathbf R)$ then the
product $\wpr _A$ with $N$ factors
\begin{equation}\label{tProdmap}
(a_1,\dots ,a_N)\mapsto a_1\wpr _A \cdots \wpr _A a_N
\end{equation}
from $\maclS _{1/2}(\rr {2d})\times \cdots \times
\maclS _{1/2}(\rr {2d})$ to
$\maclS _{1/2}(\rr {2d})$ is defined by the formula
$$
\op _A(a _1 \wpr _A \cdots \wpr _A a_N) = \op _A(a_1)\circ \cdots \circ
\op _A(a_N).
$$
The map \eqref{tProdmap} can be extended in different ways,
e.{\,}g. as in \cite[Theorem 2.11]{Cordero1} which
is stated in a generalized form in Theorem \ref{thm0.3+0.4tOps} below.
Assume that the weight functions satisfy
\begin{multline}\label{weightcondtcalc}
\omega _0(T_A(X_N,X_0)) \lesssim  \prod _{j=1}^N
\omega _j(T_A(X_{j},X_{j-1})),
\quad X_0,\dots ,X_N \in \rr {2d},
\end{multline}
where
\begin{multline}\label{Ttdef}
T_A(X,Y) =(y+A(x-y),\xi +A^*(\eta -\xi ),\eta -\xi , x-y),
\\[1ex]
X=(x,\xi )\in \rr {2d},\ Y=(y,\eta )\in \rr {2d}.
\end{multline}
Here $A^*$ denotes $A$ transposed. 
Assume that the Lebesgue parameters satisfy
\begin{equation}\label{pqconditions}
\max \left ( \masfR _N(\mabfq ') ,0 \right )
\le  \min _{j=1,\dots ,N} \left ( \frac 1{p_0'},\frac 1{q_0},\frac 1{p_j},\frac 1{q_j'},
\masfR _N(\mabfp )\right )
\end{equation}
or
\begin{equation}\label{pqconditions3}
\masfR _N(\mabfp ) \ge 0,
\quad
\frac 1{q_0}\le  \frac 1{p_0'} \leq \frac12
\quad \text{and}\quad
\frac 1{q_j'}\le  \frac 1{p_j} \leq \frac12,
\quad
j=1,\dots ,N,
\end{equation}
where
\begin{align*}
\masfR _N(\mabfp ) &= ({N-1})^{-1}\left ( \sum _{j=1}^N\frac
1{p_j}-\frac 1{p_0}  \right ),
\\[1ex]
\mabfp &= (p_0,p_1,\dots ,p_N)\in [1,\infty ]^{N+1}.
\end{align*}

\par

\begin{thm}\label{thm0.3+0.4tOps}
Suppose $p_j,q_j\in [1,\infty ]$, $j=0,1,\dots , N$ satisfy
\eqref{pqconditions} or \eqref{pqconditions3}, and 
suppose $\omega _j \in \mascP _E(\rr {4d})$, $j=0,1,\dots ,N$,
satisfy \eqref{weightcondtcalc} and \eqref{Ttdef}. 
Then the map
\eqref{tProdmap} from $\maclS _{1/2}(\rr {2d}) \times \cdots \times
\maclS _{1/2}(\rr {2d})$ to $\maclS _{1/2}(\rr {2d})$
extends uniquely to a continuous and associative map from $M ^{p_1,q_1}
_{(\omega _1)}(\rr {2d}) \times \cdots \times M ^{p_N,q_N}
_{(\omega _N)}(\rr {2d})$ to $M ^{p_0,q_0} _{(\omega _0)}(\rr {2d})$, 
and
\begin{equation*}
\nm {a_1\wpr _A \cdots \wpr _A a_N}{M^{p_0,q_0}_{(\omega _0)}}
\lesssim \prod _{j=1}^N \nm {a_j}{M^{p_j,q_j}_{(\omega _j)}},
\end{equation*}
for $a_j\in M^{p_j,q_j}_{(\omega _j)}(\rr {2d})$,
$j=1,\dots ,N$.
\end{thm}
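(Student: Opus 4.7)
The strategy is to reduce to the Kohn--Nirenberg case $A=0$ and then pass to the Gabor matrix representation of Section~1.5. By Proposition~\ref{Prop:CalculiTransfer}, $\op_A(a)=\op_0(b)$ precisely when $b=e^{i\scal{AD_\xi}{D_x}}a$. This chirp Fourier multiplier on phase space acts as a homeomorphism on every $M^{p,q}_{(\omega')}(\rr{2d})$, with a corresponding pullback of the weight; the induced weight transformation is exactly what turns condition~\eqref{weightcondtcalc} for a general $A$ into the same condition for $A=0$ (for $A=0$ the extra shear terms in $T_A(X,Y)$ reduce to pure translations). Hence it suffices to prove the statement with $\wpr_A$ replaced by $\wpr_0$.

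For $A=0$, fix Gabor windows $\phi_1,\phi_2\in\Sigma_1(\rr d)\setminus 0$ and a lattice $\Lambda=\theta\zz d$ small enough for Lemma~\ref{aFrames}, and let $C=C_{\phi_2}\circ D_{\phi_1}$ be the intermediate matrix~\eqref{mellanmatris}. Iterating~\eqref{KNcomp}, for test symbols $a_1,\dots,a_N\in\maclS_{1/2}(\rr{2d})$ one has
\begin{equation*}
\op_0(a_1\wpr_0\cdots\wpr_0 a_N)=D_{\phi_1}\circ\bigl(A_1\circ C\circ A_2\circ C\circ\cdots\circ C\circ A_N\bigr)\circ C_{\phi_2},
\end{equation*}
where $A_j$ is the matrix attached to $a_j$ via~\eqref{Eq:SpecMatrixEl}. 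By the symbol--matrix norm equivalence~\eqref{Eq:SymbMatrixEstLink}, the theorem reduces to the discrete estimate
\begin{equation*}
\nm{A_1\circ C\circ A_2\circ\cdots\circ C\circ A_N}{\mathbb U^{p_0,q_0}(\omega_0,\Lambda^2)}\lesssim\prod_{j=1}^{N}\nm{A_j}{\mathbb U^{p_j,q_j}(\omega_j,\Lambda^2)}.
\end{equation*}

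The feature that drives this estimate is that $C$ has super-polynomial off-diagonal decay: since $\phi_1,\phi_2\in\Sigma_1$, the Gabor coefficients of the identity with respect to these windows belong to $\mathbb U^{r,r}(v_0,\Lambda^2)$ for every $r>0$ and every submultiplicative polynomial weight $v_0$. Expanding the composed matrix as a sum over intermediate indices $\mabfk_1,\dots,\mabfk_{N-1}\in\Lambda^2$, performing the chain change of variables that turns successive pairs of indices into the arguments of $T_0$, and applying \eqref{weightcondtcalc} to distribute $\omega_0$ among the $\omega_j$, one is left with a discrete mixed-norm inequality: H\"older in one of the two slots of $\mathbb U^{p,q}$ and Young in the other, summed along the chain, where the slack provided by the $N-1$ copies of $C$ absorbs any residual $\ell^r$ factor.

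The principal obstacle is the bookkeeping for these mixed-norm H\"older and Young inequalities: the two alternative hypotheses~\eqref{pqconditions} and~\eqref{pqconditions3} correspond precisely to the two admissible orderings in which H\"older can be placed on the row variables and Young on the column variables (or vice versa), while the parameter $\masfR_N$ records that each of the $N-1$ convolutions with $C$ may absorb an arbitrarily small amount of summability and redistribute the remaining budget across the $a_j$. Once the matrix inequality is established, uniqueness of the continuous $N$-linear extension follows from density of $\maclS_{1/2}(\rr{2d})$ in $M^{p_j,q_j}_{(\omega_j)}(\rr{2d})$ (in norm when $p_j,q_j<\infty$, and in the weak-$*$ sense otherwise), and associativity descends from that of operator composition via~\eqref{Eq:OpMatrixLink}.
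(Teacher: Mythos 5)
There is a genuine gap at the heart of your argument: the multilinear matrix estimate
\begin{equation*}
\nm{A_1\circ C\circ A_2\circ\cdots\circ C\circ A_N}{\mathbb U^{p_0,q_0}(\omega_0,\Lambda^2)}\lesssim\prod_{j=1}^{N}\nm{A_j}{\mathbb U^{p_j,q_j}(\omega_j,\Lambda^2)}
\end{equation*}
under \eqref{pqconditions} or \eqref{pqconditions3} is the entire analytic content of the theorem, and you never prove it; you only assert that ``H\"older in one slot, Young in the other'' plus bookkeeping will do. That scheme cannot reach the stated range of exponents. What you describe is exactly the mechanism of Proposition \ref{prop:TwoDimMatrixCase}: take absolute values in \eqref{eq:buppsk}, control the row norms by H\"older, then majorize the column norms by a convolution and apply Young. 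Already for $N=2$ the exponents $p_1=p_2=1$, $q_1=q_2=q_0=2$, $p_0=\infty$ satisfy \eqref{pqconditions} (one computes $\masfR_2(\mabfq')=1/2$ and the right-hand minimum equals $1/2$), so they are within the scope of Theorem \ref{thm0.3+0.4tOps}, but after the majorization the column step would require $\ell^2\ast\ell^2\subseteq\ell^2$, which is false; equivalently these exponents violate \eqref{EqCaseqLessp} and \eqref{EqCasepLessq}. This is precisely the example $M^{1,2}\wpr M^{1,2}\subseteq M^{\infty,2}$ highlighted in the paper's closing remark as lying outside the reach of the discretized H\"older--Young method. Hence your claim that \eqref{pqconditions} and \eqref{pqconditions3} ``correspond precisely to the two admissible orderings'' of H\"older and Young is unsubstantiated, and the invoked slack from the $N-1$ copies of $C$ cannot repair it: convolving with a sequence lying in $\ell^r$ for every $r>0$ never improves a Lebesgue exponent beyond what Young already gives, and $\masfR_N(\mabfp)=(N-1)^{-1}\bigl(\sum_j 1/p_j-1/p_0\bigr)$ has nothing to do with $C$. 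To obtain \eqref{pqconditions}/\eqref{pqconditions3} one must use more than absolute values of Gabor matrices, e.g. duality and complex interpolation between endpoint estimates as in \cite{Cordero1}, none of which appears in your sketch.

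For comparison, the paper does not prove Theorem \ref{thm0.3+0.4tOps} by discretization at all: it states that the result follows by the arguments of \cite[Theorem~2.11]{Cordero1}, i.e.\ the continuous representation of $V_\Phi(a_1\wpr_A\cdots\wpr_A a_N)$ by an integral over intermediate phase-space variables, combined with weighted H\"older--Young estimates and interpolation, adapted to the $A$-parametrized calculus through $T_A$ in \eqref{Ttdef} and to weights in $\mascP_E(\rr{4d})$. Your reduction to $A=0$ and the identification $\op_0(a_1\wpr_0\cdots\wpr_0 a_N)=D_{\phi_1}\circ(A_1\circ C\circ\cdots\circ A_N)\circ C_{\phi_2}$ are sound as far as they go (this is how Theorem \ref{Thm:BilinCase} is proved), but that route yields only the strictly narrower conditions of type \eqref{Eq:CaseLesspjp}--\eqref{EqCasepLessq}, not the theorem as stated; note also that when some $p_j$ or $q_j$ equals $\infty$ the uniqueness of the extension cannot be settled by norm density of $\maclS_{1/2}$ alone, which your last paragraph glosses over.
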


\par

Theorem \ref{thm0.3+0.4tOps} follows by similar arguments as
in the proof of \cite[Theorem 2.11]{Cordero1}. The details are left for the reader.

\par

\begin{rem}
We note that the definition of $T_A$ in \cite[Eq.~(2.30)]{Cordero1}
is incorrect and should be replaced by \eqref{Ttdef}
with $A=t I$, in order for \cite[Theorem~2.11]{Cordero1} to hold. A
corrected version of  \cite{Cordero1} has been posted on arxiv. 
\end{rem}

\par

\section{Composition of pseudodifferential operators with symbols in
quasi-Banach modulation spaces}\label{sec2}

\par

In this section we deduce a composition result for pseudodifferential operators
with symbols in modulation spaces with Lebesgue parameters in $(0,\infty]$. 

\par

If $A\in \M(d,\mathbf R)$ then the map
\begin{equation}\label{weylbilin}
(a_1,a_2)\mapsto a_1\wpr _A a_2
\end{equation}
from $\maclS _{1/2}(\rr {2d})\times
\maclS _{1/2}(\rr {2d})$ to
$\maclS _{1/2}(\rr {2d})$ is defined by 
\begin{equation*}
\op _A(a _1 \wpr _A a_2) = \op _A(a_1)\circ
\op _A(a_2). 
\end{equation*}

\par

The following result is the principal result of this paper.
It concerns sufficient conditions for the unique extension of
\eqref{weylbilin} to symbols in quasi-Banach modulation spaces. 

\par

The weight functions are assumed to obey the estimates 
\begin{multline}\label{Eq:WeightCondBilinCase}
\omega _0(T_A(Z,X)) \lesssim 
\omega _1(T_A(Y,X)) \, \omega _2(T_A(Z,Y)),
\quad X,Y ,Z \in \rr {2d},
\end{multline}
where
\begin{multline}\label{Ttdef2}
T_A(X,Y) =(y+A(x-y),\xi +A^*(\eta -\xi ), \eta-\xi , x-y),
\\[1ex]
X=(x,\xi )\in \rr {2d},\ Y=(y,\eta )\in \rr {2d}.
\end{multline}
(Cf. \eqref{weightcondtcalc} and \eqref{Ttdef}.)

\par

\begin{thm}\label{Thm:BilinCase}
Let $A\in \M(d,\mathbf R)$
and suppose $\omega _j \in \mascP _E(\rr {4d})$, $j=0,1,2$, satisfy
\eqref{Eq:WeightCondBilinCase} and \eqref{Ttdef2}. 
Suppose $p_j,q_j\in (0,\infty ]$, $j=0,1,2$, satisfy
\begin{equation}\label{Eq:CaseLesspjp}
\frac 1{p_0} \le \frac 1{p_1}+\frac 1{p_2}, 
\end{equation}
and either
\begin{equation}\label{EqCaseqLessp}
q_1,q_2 \le q_0 \le  \min (1,p_0)  
\end{equation}
or
\begin{equation}\label{EqCasepLessq}
\min (1,p_0) \le q_1,q_2 \le q_0
\quad \text{and} \quad
\frac 1{\min (1,p_0)}+\frac 1{q_0} \le \frac 1{q_1}+\frac 1{q_2} .
\end{equation}
Then the map
$(a_1,a_2)\mapsto a_1 \wpr _A a_2$ from
$\maclS _{1/2}(\rr {2d})\times \maclS _{1/2}(\rr {2d})$ to
$\maclS _{1/2}(\rr {2d})$ extends uniquely to a continuous map from
$M^{p_1,q_1}_{(\omega _1)}(\rr {2d})
\times M^{p_2,q_2}_{(\omega _2)}(\rr {2d})$ to
$M^{p_0,q_0}_{(\omega _0)}(\rr {2d})$, and
\begin{equation*}
\nm{a_1 \wpr _A a_2}{M_{(\omega_0 )}^{p_0,q_0}}
\lesssim \nm{a_1}{M_{(\omega_1 )}^{p_1,q_1}} \nm{a_2}{M_{(\omega_2 )}^{p_2,q_2}}
\end{equation*}
for all $a_1 \in M^{p_1,q_1}_{(\omega _1)}(\rr {2d})$ and $a_2 \in
M^{p_2,q_2}_{(\omega _2)}(\rr {2d})$.
\end{thm}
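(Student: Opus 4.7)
My plan is to discretize the symbols with a Gabor frame, reduce the Weyl-product estimate to a matrix composition inequality, and control the latter by a quasi-Banach Young-type inequality on the lattice. As a preliminary reduction I observe that, by Proposition \ref{Prop:CalculiTransfer}, the Fourier multiplier $e^{i\scal {AD_\xi}{D_x}}$ intertwines $\op _A$ and $\op = \op _0$; since this multiplier is a homeomorphism on every $M^{p,q}_{(\omega )}(\rr {2d})$ modulo a routine adjustment of $\omega$ that preserves \eqref{Eq:WeightCondBilinCase}--\eqref{Ttdef2}, I may assume $A=0$ throughout. Following Lemma \ref{aFrames} I then pick $\phi _1,\phi _2 \in \Sigma _1(\rr d)\setminus 0$ and a sufficiently fine lattice $\Lambda = \theta \zz d$ so that the Rihaczek-based Gabor system is a frame on $L^2(\rr {2d})$ with a canonical dual $\Psi$ of super-polynomial decay. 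With $A_1,A_2$ the matrices attached to $a_1,a_2$ via \eqref{Eq:SpecMatrixEl} and $C = C_{\phi _2}\circ D_{\phi _1}$ the fixed matrix \eqref{mellanmatris}, the combination of \eqref{Eq:SymbMatrixEstLink}, \eqref{KNcomp} and \eqref{KNprodnorm} reduces the asserted inequality to
\begin{equation*}
\nm {A_1\circ C\circ A_2}{\mathbb U^{p_0,q_0}(\omega _0,\Lambda^2)} \lesssim \nm {A_1}{\mathbb U^{p_1,q_1}(\omega _1,\Lambda^2)} \, \nm {A_2}{\mathbb U^{p_2,q_2}(\omega _2,\Lambda^2)},
\end{equation*}
where (by a slight abuse) the $\omega_j$ now denote the weights on $\rr {4d}$ obtained from the original ones via \eqref{Eq:omega0omegaRel}.

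Next I will translate this matrix inequality into a weighted mixed-norm convolution bound. Because $\phi _1, \phi _2, \Psi \in \bigcap _{r>0} M^r_{(v)}(\rr d)$ for any submultiplicative $v$ of exponential type, the entries of $C$ decay faster than any inverse polynomial in the lattice distance. The weight hypothesis \eqref{Eq:WeightCondBilinCase} combined with \eqref{Ttdef2} yields, after passing to the shifted matrices $H_{A_j,\omega_j}$ of \eqref{Hhdef}, a pointwise factorization of the form
\begin{equation*}
\omega _0(\mabfj,\mabfj-\mabfk) \lesssim \omega _1(\mabfj,\mabfj-\mabfl)\, \omega _2(\boldsymbol m, \boldsymbol m-\mabfk)\, w(\mabfl-\boldsymbol m),
\end{equation*}
for some polynomially bounded $w$, whenever $\mabfj-\mabfk = (\mabfj-\mabfl) + (\mabfl-\boldsymbol m) + (\boldsymbol m-\mabfk)$. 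Expanding $A_1\circ C\circ A_2$ entry-wise and applying this factorization reduces everything to an estimate of the form
\begin{equation*}
\nm {\alpha \ast \gamma \ast \beta}{\ell ^{p_0,q_0}(\Lambda ^2)} \lesssim \nm \alpha {\ell ^{p_1,q_1}(\Lambda ^2)}\, \nm \beta {\ell ^{p_2,q_2}(\Lambda ^2)},
\end{equation*}
where $\alpha ,\beta$ encode the $H_{A_j,\omega_j}$, $\gamma$ is a fast-decaying kernel built from $C$ and $w$, and the convolution is taken in the displacement variables.

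It remains to verify this mixed-norm convolution inequality under the exponent conditions \eqref{Eq:CaseLesspjp}--\eqref{EqCasepLessq}. In the regime \eqref{EqCaseqLessp} with $q_0\le \min(1,p_0)$, I use the embedding $\ell ^{q_j}\hookrightarrow \ell ^{q_0}$ together with the $r$-triangle inequality $\nm{\sum _k f_k}X^r\le \sum _k \nm{f_k}X^r$ valid for $r=q_0\le 1$ to dispatch the inner sum, after which \eqref{Eq:CaseLesspjp} governs the $\ell ^p$ step. In the regime \eqref{EqCasepLessq} I instead start from the classical Young inequality on the scale $q\ge 1$ and distribute the convolution losses so that the rapid decay of $\gamma$ absorbs any slack; the condition $1/\min (1,p_0)+1/q_0\le 1/q_1+1/q_2$ is precisely what makes the accounting close. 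In both regimes $\gamma$ belongs to every $\ell ^r$, so it contributes no extra constraint beyond those in the theorem.

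The main obstacle is this last step. In the Banach range $p_j,q_j\ge 1$ duality and complex interpolation would trivialize much of the case analysis, but for parameters in $(0,1)$ these tools are unavailable, and the inequalities must be established by direct manipulation using only the quasi-triangle inequality and the embedding $\ell ^p\hookrightarrow \ell ^q$ for $p\le q$. It is precisely this loss of duality that forces the splitting into the two regimes \eqref{EqCaseqLessp} and \eqref{EqCasepLessq}, and verifying that their union is the correct sufficient condition is the substantive quantitative content of the theorem; once the $\ell ^{p,q}$ estimates are in hand, the reduction to $A=0$, the Gabor discretization and the unwinding of the weight inequality are essentially routine.
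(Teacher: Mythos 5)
Your overall route is the paper's: reduce to $A=0$, discretize with the Rihaczek-type Gabor frame of Lemma \ref{aFrames}, pass to the matrix inequality for $A_1\circ C\circ A_2$ in the classes $\mathbb U^{p,q}$, and prove that inequality by quasi-Banach H{\"o}lder/Young estimates split into the two regimes. Two points, however, are genuine gaps. First, the theorem asserts a \emph{unique} extension and your proposal never addresses uniqueness. Extension by density is not available in general: the hypotheses allow $p_j=\infty$ or $q_j=\infty$ (for instance $p_0=\infty$, $q_0\le 1$ under \eqref{EqCaseqLessp}), and $\maclS_{1/2}$ is not dense in the corresponding modulation spaces. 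The paper closes this with a separate case analysis, embedding $M^{p_j,q_j}_{(\omega_j)}$ into $M^{\infty,1}_{(\omega_j)}$ or $M^{\infty,q_j}_{(\omega_j)}$ and invoking the uniqueness statements of the Banach-range result \cite[Theorem~2.11]{Cordero1}; some argument of this kind must be supplied.

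Second, the quantitative core in regime \eqref{EqCasepLessq} is misdescribed. When $p_0<1$ one can also have $q_0<1$, and then ``classical Young at scale $q\ge 1$'' does not apply; nor can the rapid decay of $\gamma$ absorb exponent slack, since fast decay of $C$ only neutralizes the moderate-weight factor, it cannot repair a failing convolution inequality between the two symbol sequences. The mechanism that works (and is the paper's, in Proposition \ref{prop:TwoDimMatrixCase}) is: bound the row-variable $\ell^{p_0}$ norm of the composed matrix by raising to the power $p_0$ and using H{\"o}lder with exponents $p_j/p_0$, which produces a one-variable convolution of $c_m(k)=\nm{{\mathfrak a}_m(\cdot ,k)}{\ell^{p_m}}^{p_0}$ in the offset variable, and then apply Young at the rescaled exponents $q_j/p_0\ge 1$; the condition $1/\min(1,p_0)+1/q_0\le 1/q_1+1/q_2$ is precisely the Young condition at this rescaled level. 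Relatedly, since the weights are only in $\mascP_E(\rr{4d})$ they may grow exponentially, so your auxiliary weight $w$ is exponential rather than polynomial, and you must use that $|V_{\phi_2}\phi_1(x,\xi )|\lesssim e^{-r(|x|+|\xi |)}$ for every $r>0$ (windows in $\Sigma_1$), i.e. $C\in \bigcap_{r>0}\mathbb U^{\infty ,r}(v_0,\Lambda ^2)$; decay ``faster than any inverse polynomial'' is not enough for this weight class. The cleanest way to organize the estimate is the paper's: prove the bilinear matrix bound once and apply it twice, first to $A_1$ and $C\circ A_2$ with weights $\vartheta_0,\vartheta_1,\vartheta_2$, then to $C$ and $A_2$ with weights $\vartheta_2,v_0,\vartheta_2$ and exponents $(\infty ,r)$, $r=\min(1,p_2,q_2)$.
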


\par

We need some preparations for the proof. 
The following result contains the needed continuity properties for matrix operators.

\par

\begin{prop}\label{prop:TwoDimMatrixCase}
Let $\Lambda \subseteq \rr {d}$ be a lattice, let $p_j,q_j\in (0,\infty ]$, $j=0,1,2$,
be such that \eqref{Eq:CaseLesspjp} -- \eqref{EqCasepLessq} hold,
and suppose $\omega _0,\omega _1,\omega _2\in \mascP _E(\rr {2d})$
satisfy
\begin{equation*}
\omega _0(x,z)\lesssim \omega _1(x,y) \, \omega _2(y,z),\qquad x,y,z\in \rr d.
\end{equation*}
Then the map
$(A_1,A_2)\mapsto A_1 \circ A_2$ from $\mathbb U_0(\Lambda )
\times \mathbb U_0(\Lambda )$ to $\mathbb U_0(\Lambda )$
extends uniquely to a continuous map from
$\mathbb U^{p_1,q_1}(\omega _1,\Lambda )
\times \mathbb U^{p_2,q_2}(\omega _2,\Lambda )$ to
$\mathbb U^{p_0,q_0} (\omega _0,\Lambda )$,
and
\begin{equation}\label{Eq:MatrixCompEst}
\nm {A_1\circ A_2}{\mathbb U^{p_0,q_0}(\omega _0,\Lambda )} 
\lesssim \nm {A_1}{\mathbb U^{p_1,q_1}(\omega_1 ,\Lambda )}
\nm {A_2}{\mathbb U^{p_2,q_2}(\omega_2 ,\Lambda )}.
\end{equation}
\end{prop}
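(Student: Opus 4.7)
The plan is to reduce the matrix composition bound to a twisted-convolution estimate for two-index sequences on $\Lambda^2$, and then dispatch it with quasi-Banach Minkowski, H\"older, and Young-type inequalities in $\ell^p(\Lambda)$. First I would establish the inequality on $\mathbb U_0(\Lambda)\times\mathbb U_0(\Lambda)$ and later extend by density. For such matrices, substituting $\mabfa_i(j,l)=H_{A_i,\omega_i}(j,j-l)/\omega_i(j,l)$ into $\mabfc(j,j-k)=\sum_{l\in\Lambda}\mabfa_1(j,l)\,\mabfa_2(l,j-k)$ and applying the weight hypothesis with $(x,y,z)=(j,l,j-k)$ to dominate the ratio $\omega_0(j,j-k)/(\omega_1(j,l)\omega_2(l,j-k))$ by a constant yields, after the substitution $m=j-l$,
\begin{equation*}
H_{C,\omega_0}(j,k)\,\lesssim\,\sum_{m\in\Lambda}F(j,m)\,G(j-m,k-m),
\end{equation*}
where $C=A_1\circ A_2$, $F=H_{A_1,\omega_1}$ and $G=H_{A_2,\omega_2}$. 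This is the fundamental pointwise reformulation that turns ordinary matrix multiplication into a twisted convolution compatible with the $H$-parameterization of $\mathbb U^{p,q}$.

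Taking the $\ell^{p_0}$-norm in $j$ for each fixed $k$ and using translation invariance of the counting measure on $\Lambda$, I would obtain a convolution-type bound on $\Lambda$. When $p_0\ge 1$, Minkowski followed by H\"older (the latter based on $1/p_0\le 1/p_1+1/p_2$ with the embedding $\ell^r\hookrightarrow\ell^{p_0}$ when $1/r=1/p_1+1/p_2$) produces $h_{C,p_0,\omega_0}(k)\le (h_{A_1,p_1,\omega_1}\ast h_{A_2,p_2,\omega_2})(k)$. When $p_0<1$, the quasi-subadditivity $|\sum_m x_m|^{p_0}\le\sum_m|x_m|^{p_0}$ replaces Minkowski and yields the power-elevated variant
\begin{equation*}
h_{C,p_0,\omega_0}(k)^{p_0}\,\le\,(h_{A_1,p_1,\omega_1}^{p_0}\ast h_{A_2,p_2,\omega_2}^{p_0})(k).
\end{equation*}
I then take the $\ell^{q_0}$-norm in $k$. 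Under \eqref{EqCaseqLessp}, where $q_0\le\min(1,p_0)$, the bound follows from the quasi-Banach convolution estimate $\|f\ast g\|_{\ell^r}\le\|f\|_{\ell^r}\|g\|_{\ell^r}$ valid for $r\le 1$, applied with $r=q_0$ (when $p_0\ge 1$) or $r=q_0/p_0$ (when $p_0<1$), followed by the embedding $\ell^{q_j}\hookrightarrow\ell^{q_0}$ since $q_j\le q_0$. Under \eqref{EqCasepLessq}, where $q_j\ge\min(1,p_0)$, I would instead invoke Young's convolution inequality in its standard form (valid for exponents $\ge 1$) applied either to $h_{A_1}\ast h_{A_2}$ at exponents $q_j\ge 1$ (when $p_0\ge 1$) or to $h_{A_1}^{p_0}\ast h_{A_2}^{p_0}$ at exponents $q_j/p_0\ge 1$ (when $p_0<1$); the precise numerical relation $1/\min(1,p_0)+1/q_0\le 1/q_1+1/q_2$ is exactly what Young requires to land in $\ell^{q_0}$, respectively $\ell^{q_0/p_0}$, with a slack absorbed by the embedding.

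The uniform bilinear bound for finitely supported matrices then extends uniquely to the full spaces by density when $p_j,q_j<\infty$ and by a standard weak-$\ast$ limiting argument when $\infty$ occurs. The main obstacle lies in the bookkeeping across the four quasi-Banach regimes determined by whether each of $p_0$, $q_0$, and $q_0/p_0$ sits above or below the threshold $1$: in each regime one must verify that the arithmetic in \eqref{EqCaseqLessp} or \eqref{EqCasepLessq} matches precisely what is required by the appropriate Minkowski, H\"older, or Young inequality in quasi-Banach $\ell^r$. The technical device that unifies the regimes is raising to the power $p_0$ when $p_0<1$, which converts each quasi-Banach $\ell^{p_0}$ computation into a Banach $\ell^1$-based one to which classical convolution estimates apply directly.
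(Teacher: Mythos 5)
Your proposal is correct and follows essentially the same route as the paper: the same pointwise twisted-convolution bound for the weighted matrix entries, the same split into the regimes $p_0\ge 1$ (Minkowski plus H\"older) and $p_0<1$ (raising to the power $p_0$), and then either the $\ell^r$-convolution inequality for $r\le 1$ under \eqref{EqCaseqLessp} or Young's inequality at the exponents $q_j$ resp. $q_j/p_0$ under \eqref{EqCasepLessq}. The only divergence is the final extension step, where the paper avoids density/weak$^*$ arguments (finitely supported matrices are not dense when an exponent is infinite) by first treating matrices with non-negative entries via Beppo--Levi/Fatou and then splitting a general matrix into four non-negative parts, so that the composition is defined directly by absolutely convergent entry sums.
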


\par

\begin{proof}
Let $\mathbb U_{0,+}'(\Lambda )$ be the set of all $A\in
\mathbb U_0'(\Lambda )$ with non-negative entries, let $A_m= (\mabfa _m(j ,k ))
_{j,k \in \Lambda}\in \mathbb U_0(\Lambda )\bigcap \mathbb U_{0,+}'(\Lambda )$,
$m=1,2$, denote the matrix elements of \mbox{$B=A_1\circ A_2$} by $\mabfb (j ,k)$,
$j,k \in \Lambda$, and set $p=p_0$, $q=q_0$, $\omega =\omega _0$,
\begin{equation*}
{\mathfrak a}_m (j ,k )
\equiv |\mabfa _m(j ,j -k ) | \, \omega _m(j ,j -k )
\quad \text{and}\quad
{\mathfrak b}(j ,k ) \equiv |\mabfb (j ,j -k )| \, \omega (j ,j -k ),
\end{equation*}
$m=1,2$. Then 
\begin{align*}
\nm {A_m}{\mathbb U^{p_m,q_m}(\omega_m ,\Lambda )}
& = \nm { {\mathfrak a}_m}{\ell ^{p_m,q_m}}, \quad m=1,2,
\\[1ex]
\nm {A_1\circ A_2}{\mathbb U^{p,q}(\omega ,\Lambda )}
& = \nm { {\mathfrak b}}{\ell ^{p,q}},
\end{align*}
and we first prove
\begin{equation*}
\nm { {\mathfrak b}}{\ell ^{p,q}}\le \nm { {\mathfrak a}_1}{\ell ^{p_1,q_1}}
\nm { {\mathfrak a}_2}{\ell ^{p_2,q_2}}.
\end{equation*}

\par

We have
\begin{equation}\label{eq:buppsk}
{\mathfrak b}(j ,k) \le \sum _{l \in \Lambda } {\mathfrak a}_1 (j ,l )
{\mathfrak a}_2 (j -l ,k -l ). 
\end{equation}

\par

In order to estimate $\nm { {\mathfrak b}(\cdo ,k)}{\ell ^p}$ we consider
the cases $p<1$ and $p\ge 1$ separately.

\par

First assume that $p<1$, and set $r_j=\frac {p_j}p$. Then $\frac 1{r_1}+\frac 1{r_2}\ge 1$
by assumption \eqref{Eq:CaseLesspjp}, and
therefore H{\"o}lder's inequality yields for $k \in \Lambda$
\begin{align*} 
\nm { {\mathfrak b}(\cdo ,k)}{\ell ^p} ^p
& \le
\sum _{j \in \Lambda} \left ( \sum _{l \in \Lambda }
{\mathfrak a}_1 (j ,l )
\, {\mathfrak a}_2 (j -l ,k -l )
\right )^p
\\[1ex]
& \le
\sum _{l \in \Lambda } \sum _{j \in \Lambda } \left (
{\mathfrak a}_1 (j ,l )
\, {\mathfrak a}_2 (j -l ,k -l )
\right )^p
\\[1ex]
& \le 
\sum _{l \in \Lambda } \nm {{\mathfrak a}_1 (\cdo  ,l )^p}{\ell ^{r_1}}
\nm {{\mathfrak a}_2 (\cdo  ,k -l )^p}{\ell ^{r_2}}
\\[1ex]
& =
\sum _{l \in \Lambda } \nm {{\mathfrak a}_1 (\cdo  ,l )}{\ell ^{p_1}}^p
\nm {{\mathfrak a}_2 (\cdo  ,k -l )}{\ell ^{p_2}}^p,
\end{align*}
that is
\begin{equation*}
\nm { {\mathfrak b}(\cdo ,k)}{\ell ^p}
\le
(c_1*c_2 (k ))^{1/p},
\end{equation*}
with $c_m(k )=\nm { {\mathfrak a}_m(\cdo ,k )}{\ell ^{p_m}}^p$, $m=1,2$.

\par

In order to estimate $(c_1*c_2)^{1/p}$ we first assume
\eqref{EqCaseqLessp}. Then
\begin{multline*}
\nm { {\mathfrak b}}{\ell ^{p,q}}
\le \nm {(c_1*c_2)^{1/p}}{\ell ^q} = \nm {c_1*c_2}{\ell ^{q/p}}^{1/p}
\\[1ex]
\le (\nm {c_1}{\ell ^{q/p}} \nm {c_2}{\ell ^{q/p}} )^{1/p}
= \nm { {\mathfrak a}_1}{\ell ^{p_1,q}}\nm { {\mathfrak a}_2}{\ell ^{p_2,q}}
\le \nm { {\mathfrak a}_1}{\ell ^{p_1,q_1}}\nm { {\mathfrak a}_2}{\ell ^{p_2,q_2}},
\end{multline*}
and the result follows in this case.

\par

If instead \eqref{EqCasepLessq} holds then $q\ge q_1,q_2\ge p$,
and $r_j=q_j/p$, $j=1,2$, and $r=q/p$ satisfy
$$
r_1,r_2,r\ge 1
\quad \text{and}\quad
\frac 1{r_1}+\frac 1{r_2}\ge 1+\frac 1r.
$$
Hence Young's inequality may be applied and gives
\begin{multline*}
\nm { {\mathfrak b}}{\ell ^{p,q}}
\le \nm {c_1*c_2}{\ell ^r}^{1/p}
\le (\nm {c_1}{\ell ^{r_1}} \nm {c_2}{\ell ^{r_2}} )^{1/p}
= \nm { {\mathfrak a}_1}{\ell ^{p_1,q_1}}\nm { {\mathfrak a}_2}{\ell ^{p_2,q_2}},
\end{multline*}
and the result follows in this case as well.

\par

Next we consider the case $p\ge 1$. By Minkowski's and H{\"o}lder's 
inequalities and the assumption \eqref{Eq:CaseLesspjp} we get from 
\eqref{eq:buppsk} 
\begin{multline}\label{Eq:MaElEsts}
\nm { {\mathfrak b}(\cdo ,k)}{\ell ^p} 
\le
\sum _{l \in \Lambda } \left\| 
{\mathfrak a}_1 (\cdot ,l )
\, {\mathfrak a}_2 (\cdot -l ,k -l ) \right\|_{\ell ^p}
\\[1ex]
\le
\sum _{l \in \Lambda } \nm {{\mathfrak a}_1 (\cdo  ,l )}{\ell ^{p_1}}
\nm {{\mathfrak a}_2 (\cdo  ,k -l )}{\ell ^{p_2}}
 = c_1*c_2 (k ),
\end{multline}
where $c_m (k )=\nm { {\mathfrak a}_m(\cdo ,k )}{\ell ^{p_m}}$, $m=1,2$.

\par

If \eqref{EqCasepLessq} holds then $q \ge q_1,q_2 \ge 1$ and
Young's inequality gives
\begin{equation*}
\nm { {\mathfrak b}}{\ell ^{p,q}}
\le \nm {c_1*c_2}{\ell ^q}
\le \nm {c_1}{\ell ^{q_1}} \nm {c_2}{\ell ^{q_2}} 
= \nm { {\mathfrak a}_1}{\ell ^{p_1,q_1}}\nm { {\mathfrak a}_2}{\ell ^{p_2,q_2}}
\end{equation*}
and the result follows. If instead \eqref{EqCaseqLessp} holds then 
$q \le 1$ and \eqref{Eq:MaElEsts} gives
\begin{multline*}
\nm { {\mathfrak b}}{\ell ^{p,q}}
\le \nm {c_1*c_2}{\ell ^q}
\le \nm {c_1}{\ell ^{q}} \nm {c_2}{\ell ^{q}} 
= \nm { {\mathfrak a}_1}{\ell ^{p_1,q}}\nm { {\mathfrak a}_2}{\ell ^{p_2,q}}
\\[1ex]
\le \nm { {\mathfrak a}_1}{\ell ^{p_1,q_1}}\nm { {\mathfrak a}_2}{\ell ^{p_2,q_2}}.
\end{multline*}
Thus we have proved \eqref{Eq:MatrixCompEst} when
$A_1,A_2\in \mathbb U_0(\Lambda )\bigcap \mathbb U_{0,+}'(\Lambda )$.

\par

By Beppo--Levi's theorem or
Fatou's lemma applied to the previous situation we obtain that $A_1\circ A_2$
is uniquely defined as an element in $\mathbb U^{p,q}
(\omega ,\Lambda)$ and \eqref{Eq:MatrixCompEst}
holds, provided $A_m\in \mathbb U^{p_m,q_m}(\omega _m,\Lambda )
\bigcap \mathbb U_{0,+}'(\Lambda )$ for $m=1,2$.

\par 

For $A_m\in \mathbb U^{p_m,q_m}(\omega _m,\Lambda )$,
$m=1,2$, there are unique 
$$
A_{m,k}\in \mathbb U^{p_m,q_m}(\omega _m,\Lambda )\bigcap \mathbb U_{0,+}'(\Lambda ), \quad m=1,2, \quad k=1,\dots ,4, 
$$ 
such that
$$
A_m=\sum _{k=1}^4i^kA_{m,k},
$$
and we have
$$
\nm {A_{m,k}}{\mathbb U^{p_m,q_m}(\omega _m,\Lambda )}
\le
\nm {A_m}{\mathbb U^{p_m,q_m}(\omega _m,\Lambda )}, \quad m=1,2, \quad k=1,\dots,4.
$$
Since the assertion holds true for $A_{1,k}$ and $A_{2,l}$ in place
of $A_1$ and $A_2$, it follows from the latter estimate that
$$
A_1\circ A_2 = \sum _{k,l=1}^4i^{k+l}A_{1,k}\circ A_{2,l}\in
\mathbb U^{p,q}(\omega ,\Lambda )
$$
is uniquely defined and that \eqref{Eq:MatrixCompEst}
holds for $A_m\in \mathbb U^{p_m,q_m}(\omega _m,\Lambda )$, $m=1,2$. 
\end{proof}

\par

We also need the
following result on the composition of the analysis operator and the
synthesis operator defined by two Gabor systems. 

\par

\begin{lemma}\label{lem:STFTMatrix}
Suppose $\Lambda \subseteq \rr d$ is a lattice, $\Lambda ^2=\Lambda \times \Lambda $
and $\phi _1,\phi _2\in \Sigma _1(\rr d) \setminus 0$. 
Let $C_{\phi _2} = C_{\phi _2}^\Lambda$ be the Gabor analysis operator and 
let $D_{\phi _1} = D_{\phi _1}^\Lambda$ be the Gabor synthesis operator 
defined by $\phi_2$ and $\phi_1$ respectively, and $\Lambda$. 
Then $C_{\phi _2}\circ D_{\phi _1}$ is the matrix
$(\mabfc (\mabfj , \mabfk ))_{\mabfj ,\mabfk \in \Lambda ^2}$ where
\begin{equation}\label{Eq:Celem}
\mabfc  (\mabfj , \mabfk ) = e^{i\scal k{\kappa -\iota}} V_{\phi _2}\phi _1(\mabfj -\mabfk ),
\quad \mabfj = (j,\iota ),\ \mabfk = (k,\kappa ).
\end{equation}
If $\omega_0 (X,Y)= \omega (X-Y)$,
$X, Y\in \rr {2d}$ for $\omega \in \mascP _E(\rr {2d})$, then
\begin{equation}\label{Eq:MatrixCincl}
(\mabfc (\mabfj , \mabfk ))_{\mabfj ,\mabfk \in \Lambda ^2}
\in \bigcap_{\stackrel{q>0}{\omega \in \mascP _E(\rr {2d})}} \mathbb U^{\infty ,q}(  \omega_0 ,\Lambda ^2). 
\end{equation}
\end{lemma}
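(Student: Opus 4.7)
The plan is to establish the formula \eqref{Eq:Celem} by direct computation on finitely-supported sequences, and then to read off $h_{C,\infty,\omega_0}$ explicitly and exploit the rapid decay of $V_{\phi_2}\phi_1$.

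First I would apply $C_{\phi_2} \circ D_{\phi_1}$ to a finitely-supported sequence $c \in \mathbb U_0(\Lambda^2)$. Setting $\mathbf k = (k,\kappa)$ and $\mathbf j = (j,\iota)$, linearity gives
\begin{equation*}
(C_{\phi_2}\circ D_{\phi_1})c(\mathbf j)
=\sum_{\mathbf k \in \Lambda^2} c(\mathbf k)\, V_{\phi_2}\!\bigl(e^{i\scal\cdot\kappa}\phi_1(\cdo-k)\bigr)(j,\iota),
\end{equation*}
so the matrix entry is $\mabfc(\mabfj,\mabfk) = V_{\phi_2}(e^{i\scal\cdot\kappa}\phi_1(\cdo-k))(j,\iota)$. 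A direct computation using the integral formula $(\ref{defstft})'$ and the substitution $y=z+k$ produces the phase factor $e^{-i\scal k{\iota-\kappa}} = e^{i\scal k{\kappa-\iota}}$ and leaves behind $V_{\phi_2}\phi_1(j-k,\iota-\kappa) = V_{\phi_2}\phi_1(\mathbf j - \mathbf k)$, confirming \eqref{Eq:Celem}.

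For \eqref{Eq:MatrixCincl} I would unwind the definitions. With $\omega_0(X,Y)=\omega(X-Y)$ we have $\omega_0(\mathbf j,\mathbf j-\mathbf k)=\omega(\mathbf k)$. Moreover, with the substitution $\tilde{\mathbf k} = \mathbf j - \mathbf k$ in \eqref{Eq:Celem}, one finds
\begin{equation*}
|\mabfc(\mathbf j,\mathbf j - \mathbf k)| = |V_{\phi_2}\phi_1(\mathbf k)|,
\end{equation*}
so the quantity $H_{C,\omega_0}(\mathbf j,\mathbf k) = \mabfc(\mathbf j,\mathbf j-\mathbf k)\,\omega_0(\mathbf j,\mathbf j-\mathbf k)$ has modulus independent of $\mathbf j$. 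Consequently
\begin{equation*}
h_{C,\infty,\omega_0}(\mathbf k)
= |V_{\phi_2}\phi_1(\mathbf k)|\, \omega(\mathbf k),
\end{equation*}
and \eqref{Eq:MatrixCincl} reduces to the claim that this sequence lies in $\ell^q(\Lambda^2)$ for every $q>0$ and every $\omega \in \mascP_E(\rr{2d})$.

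For that final claim, I would invoke two standard facts: first, since $\phi_1,\phi_2 \in \Sigma_1(\rr d)$, the STFT $V_{\phi_2}\phi_1$ belongs to $\Sigma_1(\rr{2d})$, so for every $r>0$ we have $|V_{\phi_2}\phi_1(X)| \lesssim e^{-r|X|}$; second, by \eqref{WeightExpEst} any $\omega \in \mascP_E(\rr{2d})$ satisfies $\omega(X) \lesssim e^{r_0|X|}$ for some $r_0>0$. Choosing $r > r_0$ gives exponential decay of $h_{C,\infty,\omega_0}$ on the lattice $\Lambda^2$, which lies in $\ell^q$ for every $q>0$. No part of this argument presents real difficulty; the only subtle point is tracking the phases and signs in the STFT computation, and the intersection in \eqref{Eq:MatrixCincl} is handled uniformly because the Gelfand--Shilov decay beats any prescribed exponential.
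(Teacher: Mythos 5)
Your proposal is correct and takes essentially the same route as the paper's proof: the same computation of the matrix entries of $C_{\phi_2}\circ D_{\phi_1}$ on finitely supported sequences giving \eqref{Eq:Celem}, the same identification $h_{C,\infty,\omega_0}(\mabfk)=|V_{\phi_2}\phi_1(\mabfk)|\,\omega(\mabfk)$ via \eqref{Hhdef}, and the same conclusion from the superexponential decay $|V_{\phi_2}\phi_1(X)|\lesssim e^{-r|X|}$ for every $r>0$ (the paper cites \cite[Theorem~2.4]{Toft8}) beating the at most exponential growth \eqref{WeightExpEst} of $\omega$, so that the sequence lies in $\ell^q(\Lambda^2)$ for all $q>0$.
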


\par

\begin{proof}
Let $f$ be a sequence on $\Lambda ^2$ such that $f(\mabfk )\neq 0$ for at most a finite number of
$\mabfk \in \Lambda ^2$. Then
\begin{equation*}
D_{\phi _1}f = \sum _{\mabfk \in \Lambda ^2} f(\mabfk )\phi _{1,\mabfk} ,\qquad \phi _{1,\mabfk}\equiv
\phi _1(\cdo -k)e^{i\scal \cdo \kappa},\ \mabfk = (k,\kappa),
\end{equation*}
and
\begin{equation*}
C_{\phi _2} (D_{\phi _1}f)(\mabfj ) = V_{\phi _2} (D_{\phi _1}f)(\mabfj )
= \sum _{\mabfk \in \Lambda ^2} V_{\phi _2} \phi _{1,\mabfk} (\mabfj) f(\mabfk ).
\end{equation*}
If $\mabfj = (j,\iota )$ then $C_{\phi _2} \circ D_{\phi _1}$
is hence given by the matrix $C = (\mabfc (\mabfj ,\mabfk ))_{\mabfj,\mabfk
\in \Lambda ^2}$ where
\begin{align*}
\mabfc(\mabfj ,\mabfk ) = V_{\phi _2} \phi _{1,\mabfk} (\mabfj)
& =
(2\pi )^{-d/2}\int_{\rr d} \phi _{1,\mabfk}(y) \, \overline {\phi _2(y-j)} \, e^{-i\scal y\iota}\, dy
\\[1ex]
& =
(2\pi )^{-d/2}e^{i\scal k{\kappa - \iota}}
\int_{\rr d} \phi _1(y) \, \overline {\phi _2(y-(j-k))} \, e^{-i\scal y{\iota -\kappa}}\, dy
\\[1ex]
& =
e^{i\scal k{\kappa - \iota}}V_{\phi _2}\phi _1(\mabfj - \mabfk )
\end{align*}
which proves \eqref{Eq:Celem}.

\par

It remains to prove \eqref{Eq:MatrixCincl}. Let $\omega \in \mascP _E(\rr {2d})$ and $q>0$.
Since $\phi _1,\phi _2 \in \Sigma _1(\rr d)$ we have by \cite[Theorem~2.4]{Toft8}
\begin{equation*}
| V_{\phi _2}\phi _1 (x,\xi )|\lesssim e^{-r(|x|+|\xi |)}
\end{equation*}
for every $r>0$. From \eqref{Eq:Celem} and \eqref{Hhdef} we obtain
\begin{equation*}
h_{C,\infty ,\omega_0 } (\mabfk) 
= \sup_{\mabfj \in \Lambda^2} |H_{C,\omega_0 }(\mabfj ,\mabfk)|
= | V_{\phi _2}\phi _1 (\mabfk ) \, \omega (\mabfk )| .
\end{equation*}
A combination of these relations and \eqref{Upqnorm} now give
\begin{equation*}
\nm C{\mathbb U^{\infty ,q}(  \omega_0,\Lambda ^2)}
= \nm{ h_{C,\infty ,\omega_0 } }{\ell^q}
= \nm {V_{\phi _2}\phi _1\cdot \omega}{\ell ^q(\Lambda^2 )}
<\infty .
\end{equation*}
Hence $C\in \mathbb U^{\infty ,q}(  \omega_0,\Lambda ^2)$ for any $\omega \in \mascP _E(\rr {2d})$ and any $q>0$. 
\end{proof}

\par

\begin{proof}[Proof of Theorem \ref{Thm:BilinCase}]
By \cite[Proposition~2.8]{To14} and Proposition \ref{Prop:CalculiTransfer}
we may assume that $A=0$. 
Pick $\phi_1, \phi_2 \in \Sigma_1(\rr d) \setminus 0$ and 
a lattice $\Lambda \subseteq \rr d$ such that $\Phi, \Psi$ and $\Lambda^2 = \Lambda \times
\Lambda \subseteq \rr {2d}$ are as in Lemma \ref{aFrames}.
Let finally $a_m\in M^{p_m,q_m}_{(\omega _m)}(\rr {2d})$, $m=1,2$. 

\par

By \eqref{Eq:omega0omegaRel} -- \eqref{Eq:OpMatrixLink} we have for $m=1,2$
\begin{align}
\nm {a_m}{M^{p_m,q_m}_{(\omega _m)}}
&\asymp
\nm {A_m}{\mathbb U^{p_m,q_m}(\vartheta _m,\Lambda ^2)}
\label{Eq:ModNormDiscNormEquiv}
\intertext{and}
\op (a_m) &= D_{\phi _1}\circ A_m \circ C_{\phi _2}
\intertext{where}
A_m &= (\mabfa _m(\mabfj ,\mabfk ))_{\mabfj ,\mabfk \in \Lambda ^2},
\notag
\end{align}
\begin{equation*}
\mabfa _m(\mabfj ,\mabfk ) \equiv e^{i\scal {k-j}\kappa} V_\Psi a_m (j,\kappa
,\iota -\kappa ,k-j), \quad \mabfj = (j,\iota ), \ \mabfk = (k,\kappa) \in \Lambda^2, 
\end{equation*}
and
\begin{equation*}
\vartheta _m(x,\xi ,y,\eta ) = \omega _m(x,\eta ,\xi -\eta ,y-x ).
\end{equation*}
Condition \eqref{Eq:WeightCondBilinCase} means for the weights $\vartheta _m$, $m=0,1,2$,
\begin{equation}\label{varthetaweight}
\vartheta _0(X,Y) \lesssim \vartheta _1(X ,Z )
\, \vartheta _2(Z ,Y),\qquad X,Y,Z \in \rr {2d}.
\end{equation}

\par

Pick $v_1 \in \mascP _E(\rr d)$ even so that $\omega _2$
is $v_2$-moderate with 
\begin{equation*}
v_2 =v_1 \otimes v_1 \otimes v_1 \otimes v_1 \in \mascP _E(\rr {4d}),
\end{equation*}
set $v = v_1^2 \otimes v_1 \in \mascP _E(\rr {2d})$ and 
\begin{equation*}
v_0(X,Y)=v(X-Y)\in \mascP _E(\rr {4d}), \quad X,Y \in \rr {2d}. 
\end{equation*}
Then $v_0$ is designed to guarantee 
\begin{equation}\label{matrixweight1}
\vartheta _2(X,Y) \lesssim v_0(X,Z) \, \vartheta _2(Z,Y), \qquad X,Y,Z \in \rr {2d}. 
\end{equation}

\par

We have by \eqref{mellanmatris} and \eqref{KNcomp}
\begin{equation*}
\op (a_1)\circ \op (a_2) = D_{\phi _1}\circ A \circ C_{\phi _2},
\end{equation*}
where
\begin{equation*}
A = A_1\circ C \circ A_2
\end{equation*}
and $C=C_{\phi _2}\circ D_{\phi _1}$.
By Lemma \ref{lem:STFTMatrix}
\begin{equation*}
C\in \underset {r>0}{\textstyle{\bigcap}}\mathbb U^{\infty ,r}(v_0,\Lambda ^2).
\end{equation*}
Set $r=\min(1,p_2,q_2)$. 
Then we obtain from \eqref{KNprodnorm}, \eqref{varthetaweight},
\eqref{matrixweight1} and Proposition \ref{prop:TwoDimMatrixCase} applied twice
\begin{align*}
\nm {a_1 \wpr _0 a_2}{M^{p_0,q_0}_{(\omega _0)}} 
& \asymp
\nm {A_1 \circ C \circ A_2}{\mathbb U ^{p_0,q_0}(\vartheta_0 ,\Lambda ^2 )}
\\[1ex]
& \lesssim \nm {A_1}{\mathbb U ^{p_1,q_1}(\vartheta_1 ,\Lambda ^2 )}
\nm {C \circ A_2}{\mathbb U ^{p_2,q_2}(\vartheta_2 ,\Lambda ^2 )}
\\[1ex]
& \lesssim \nm {A_1}{\mathbb U ^{p_1,q_1}(\vartheta_1 ,\Lambda ^2 )} 
\nm {C}{\mathbb U ^{\infty,r}(v_0 ,\Lambda ^2 )}
\nm {A_2}{\mathbb U ^{p_2,q_2}(\vartheta_2 ,\Lambda ^2 )}
\\[1ex]
& \asymp \nm {A_1}{\mathbb U ^{p_1,q_1}(\vartheta_1 ,\Lambda ^2 )}
\nm {A_2}{\mathbb U ^{p_2,q_2}(\vartheta_2 ,\Lambda ^2 )}
\\[1ex]
& \asymp \nm {a_1}{M^{p_1,q_1}_{(\omega _1)}} \nm {a_2}{M^{p_2,q_2}_{(\omega _2)}}.
\end{align*}

It remains to prove the claimed uniqueness of the extension. 
If \eqref{EqCaseqLessp} holds then $M^{p_j,q_j}_{(\omega _j)}
\subseteq M^{\infty,1}_{(\omega _j)}$, $j=1,2$, and
$M^{p_0,q_0}_{(\omega _0)} \subseteq M^{\infty,1}_{(\omega _0)}$. 
Then the claim follows from the uniqueness of the extension 
\begin{equation}\label{allqareone}
M^{\infty,1}_{(\omega _1)} \wpr_A M^{\infty,1}_{(\omega _2)}
\subseteq M^{\infty,1}_{(\omega _0)} 
\end{equation}
which is proved in \cite[Theorem~2.11]{Cordero1}. 

\par

Suppose \eqref{EqCasepLessq} holds. 
Then the same argument applies if $q \le 1$, and if $p \ge 1$ then 
the claim is a consequence of the uniqueness of the extension
\begin{equation}\label{youngq}
M^{\infty,q_1}_{(\omega _1)} \wpr_A M^{\infty,q_2}_{(\omega_2)}
\subseteq M^{\infty,q}_{(\omega _0)} 
\end{equation}
which is again proved in \cite[Theorem~2.11]{Cordero1}. 
Suppose $p<1<q$. 
If $q_1, q_2 \ge 1$ then the uniqueness follows again from the
uniqueness of \eqref{youngq}. If $q_1 \ge 1 > q_2$ then it follows
from the uniqueness of \eqref{youngq} with $q_2$ replaced by $1$, 
and analogously for $q_2 \ge 1 > q_1$. 
Finally if $q_1,q_2<1$ then the uniqueness follows from the
uniqueness of \eqref{allqareone}.
\end{proof}

\par

Let $p,q \in (0,\infty]$ and set $r=\min(1,p,q)$. 
A particular case of Theorem \ref{Thm:BilinCase} is the inclusion
\begin{equation*}
M^{p,q}_{(\omega _0)} \wpr_A M^{\infty,r}_{(\omega_2)} \subseteq M^{p,q}_{(\omega _0)} 
\end{equation*}
where the weights $\omega_0, \omega_2 \in \mascP _E(\rr {4d})$ satisfy
\begin{equation*}
\omega _0(T_A(Z,X)) \lesssim 
\omega _0(T_A(Y,X)) \, \omega _2(T_A(Z,Y)),
\quad X,Y ,Z \in \rr {2d},
\end{equation*}
and $T_A$ is defined by \eqref{Ttdef2}. 

\par

We also note that $M^{p,q}_{(\omega)}$ is an algebra under the product $\wpr_A$ provided 
$p,q \in (0,\infty]$, $q \le \min(1,p)$ and $\omega \in \mascP _E(\rr {4d})$ satisfies 
\begin{equation*}
\omega (T_A(Z,X)) \lesssim 
\omega (T_A(Y,X)) \, \omega (T_A(Z,Y)),
\quad X,Y ,Z \in \rr {2d},
\end{equation*}

\par

\section{Necessary conditions}
\label{sec3}

\par

In this final section we show that some of the sufficient conditions
in Theorem \ref{Thm:BilinCase} are necessary. 
We need the following lemma that concerns Wigner distributions. 

\par

\begin{lemma}\label{lemFseries}
Let $q_0, q \in (0,\infty ]$ satisfy $q_0 < q$,
let
$$
\phi (x)=\pi ^{-\frac d4}e^{-\frac {|x|^2}2}
\quad \text{for}\quad
x \in \rr d,
$$ 
let $\Lambda \subseteq \rr d$ be a lattice,
let $\mathbf c =\{ c(\kappa )\} _{\kappa \in \Lambda}\in \ell ^{q}(\Lambda )
\setminus \ell ^{q_0}(\Lambda )$,
where $c(\kappa )\ge 0$ for all $\kappa \in \Lambda$, 
and finally let
\begin{equation*}
f(x) = \sum _{\kappa \in \Lambda}c(\kappa )e^{i\scal x\kappa}  \phi (x)  \in \mascS'(\rr d). 
\end{equation*}
Then
\begin{equation}\label{Eq:fWignerfMod1}
f\in \underset {p>0}{\textstyle{\bigcap}} M^{p,q}(\rr d) \setminus M^{\infty ,q_0}(\rr d)
\end{equation}
and 
\begin{equation}\label{Eq:fWignerfMod2}
W_{f,\phi}\in  \underset {p>0}{\textstyle{\bigcap}} \splM^{p,q}(\rr {2d}) .
\end{equation}
\end{lemma}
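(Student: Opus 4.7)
The plan is to exploit the Gaussian structure of $\phi$ throughout, reducing every modulation-space norm to an estimate on a series of translated Gaussians.

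First I would compute $V_\phi f$ explicitly. Substituting $f(y)=\sum_\kappa c(\kappa)e^{i\scal y\kappa}\phi(y)$ into the integral defining the STFT and interchanging sum and integral yields
\begin{equation*}
V_\phi f(x,\xi)=\sum_{\kappa\in\Lambda}c(\kappa)\,V_\phi\phi(x,\xi-\kappa),
\end{equation*}
and a standard Gaussian computation gives $V_\phi\phi(x,\xi)=(2\pi)^{-d/2}e^{-(|x|^2+|\xi|^2)/4}e^{-i\scal x\xi /2}$. This leads to the pointwise bound
\begin{equation*}
|V_\phi f(x,\xi)|\le (2\pi)^{-d/2}e^{-|x|^2/4}\,G(\xi),\qquad G(\xi):=\sum_{\kappa\in\Lambda}c(\kappa)\,e^{-|\xi-\kappa|^2/4},
\end{equation*}
and at $x=0$ all phase factors vanish, so $V_\phi f(0,\xi)=(2\pi)^{-d/2}G(\xi)\ge 0$ and the pointwise bound is attained.

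Next I would establish the Wiener-type equivalence $\|G\|_{L^q}\asymp \|\mathbf c\|_{\ell^q}$ for every $q\in(0,\infty]$. For the upper bound, when $q\ge 1$ I would use H\"older or Young together with the $L^1$-integrability of the Gaussian kernel $e^{-|\cdot|^2/4}$, while for $q<1$ I would use the pointwise inequality $\bigl(\sum_\kappa a_\kappa\bigr)^q\le \sum_\kappa a_\kappa^q$ valid for nonnegative sequences together with the $L^q$-integrability of $e^{-q|\cdot|^2/4}$. The lower bound follows by restricting $\|G\|_{L^q}^q$ to disjoint small balls around the lattice points, on which $G(\xi)\gtrsim c(\kappa)$. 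Combining the upper pointwise estimate on $|V_\phi f|$ with the upper bound on $\|G\|_{L^q}$ gives $\|V_\phi f\|_{L^{p,q}}\lesssim \|e^{-|\cdot|^2/4}\|_{L^p}\,\|G\|_{L^q}<\infty$ for every $p>0$, so $f\in M^{p,q}(\rr d)$. Conversely, $\sup_x|V_\phi f(x,\xi)|\ge (2\pi)^{-d/2}G(\xi)$ together with $\|G\|_{L^{q_0}}\gtrsim \|\mathbf c\|_{\ell^{q_0}}=\infty$ gives $\|V_\phi f\|_{L^{\infty,q_0}}=\infty$, so $f\notin M^{\infty,q_0}(\rr d)$, establishing \eqref{Eq:fWignerfMod1}.

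For \eqref{Eq:fWignerfMod2} I would invoke the identity $W_{f,\phi}(x,\xi)=2^d e^{2i\scal x\xi}V_\phi f(2x,2\xi)$, which follows from the substitution $y\mapsto 2(u-x)$ in the integral defining $W_{f,\phi}$ and the evenness of $\phi$. Inserting the explicit series for $V_\phi f$ yields
\begin{equation*}
W_{f,\phi}(x,\xi)=C_0\,e^{-|x|^2}\sum_{\kappa\in\Lambda}c(\kappa)\,e^{i\scal x\kappa}\,e^{-|\xi-\kappa/2|^2}.
\end{equation*}
Since in the unweighted setting the identification $\splM^{p,q}(\rr {2d})=M^{p,q}(\rr {2d})$ holds, it suffices to show this sum belongs to $M^{p,q}(\rr {2d})$. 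I would take as window the tensor Gaussian $\Phi_0(x,\xi)=\phi(x)\phi(\xi)$; a separable Gaussian integration then shows that $|V_{\Phi_0}W_{f,\phi}(z_1,z_2,\zeta_1,\zeta_2)|$ factors, up to absolute constants, as $e^{-|z_1|^2/2}\,B(z_2,\zeta_1)\,e^{-|\zeta_2|^2/8}$ with
\begin{equation*}
B(z_2,\zeta_1)=\sum_{\kappa\in\Lambda}c(\kappa)\,e^{-|z_2-\kappa/2|^2/2}\,e^{-|\zeta_1-\kappa|^2/8}.
\end{equation*}
The $L^p$-norm of $B(\cdot,\zeta_1)$ in $z_2$ is dominated by a Wiener sum in $\zeta_1$ (by Minkowski if $p\ge 1$ and by subadditivity if $p<1$), whose $L^q$-norm is controlled by $\|\mathbf c\|_{\ell^q}$ via the Wiener-type estimate of the previous step. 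The Gaussian factors in $z_1$ and $\zeta_2$ contribute finite constants for every $p,q\in(0,\infty]$.

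The main obstacle is combinatorial rather than conceptual: carefully tracking how each of the four variables enters the $L^{p,q}(\rr {4d})$-norm and verifying that the Minkowski/subadditivity dichotomy at $p=1$ couples cleanly with the Wiener-type estimate for lattice sums of translated Gaussians. Once this bookkeeping is in place, every estimate reduces to the single inequality $\|G\|_{L^q}\lesssim \|\mathbf c\|_{\ell^q}$ established at the outset.
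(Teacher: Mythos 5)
Your proposal is correct, but its technical infrastructure differs from the paper's. The paper discretizes everything through the Gabor frame machinery: after refining the lattice so that Proposition \ref{ConseqThmS} applies, the inclusion $f\in\bigcap_{p>0}M^{p,q}$ is immediate from boundedness of the synthesis operator, the exclusion $f\notin M^{\infty,q_0}$ comes from sampling $V_\phi f(0,\iota)=\mathbf c\ast\phi_1(\iota)$ on the lattice and using $\nm f{M^{\infty,q_0}}\asymp\nm{V_\phi f}{\ell^{\infty,q_0}(\Lambda^2)}$, and \eqref{Eq:fWignerfMod2} is proved by computing $V_\Phi W_{f,\phi}$ explicitly and estimating the discrete $\ell^{p,q}(\Lambda^4)$ norm (reducing to $p\le\min(1,q)$, then subadditivity plus discrete Young). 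You instead stay entirely in the continuous setting: you compute $V_\phi f$ as a lattice sum of shifted Gaussians, prove a Wiener-amalgam type equivalence $\nm G{L^q}\asymp\nm{\mathbf c}{\ell^q}$ for such sums (using positivity and lattice separation for the lower bound), obtain the exclusion from $\sup_x|V_\phi f(x,\xi)|\ge|V_\phi f(0,\xi)|=cG(\xi)$, and for \eqref{Eq:fWignerfMod2} you use the dilation identity $W_{f,\phi}(x,\xi)=2^de^{2i\scal x\xi}V_\phi f(2x,2\xi)$ (valid here since $\phi$ is even) instead of recomputing the Wigner integral, together with a tensor Gaussian window so that the four-variable STFT estimate splits into $\kappa$-independent Gaussian factors and the lattice sum $B(z_2,\zeta_1)$; the Minkowski/subadditivity dichotomy then plays exactly the role that discrete Young and $p\le 1$ subadditivity play in the paper. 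What your route buys is independence from Propositions \ref{ThmS} and \ref{ConseqThmS} (no lattice refinement, no frame duals) and a cleaner identification of the single inequality $\nm G{L^q}\lesssim\nm{\mathbf c}{\ell^q}$ doing all the work; what the paper's route buys is that the positive half of \eqref{Eq:fWignerfMod1} is a one-line citation. Two harmless remarks: your displayed factorization of $|V_{\Phi_0}W_{f,\phi}|$ should be stated as an upper bound (phases in the $\kappa$-sum may cancel), which is all you use, and the specific Gaussian rates $1/2$ and $1/8$ you quote are not those produced by the window $\phi\otimes\phi$ (a direct computation gives $1/3$ and $1/6$ type exponents), but only positivity of the rates matters, so neither point affects the argument.
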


\par

\begin{proof}
By replacing $\Lambda$ by a sufficiently dense lattice $\Lambda _0$,
containing $\Lambda$ and letting $c(\kappa )=0$ when
$\kappa \in \Lambda _0\setminus \Lambda$, we reduce ourselves to a situation
where the hypothesis in Proposition \ref{ConseqThmS} is
fulfilled.
Hence we may assume that \eqref{DualFrames} are dual frames
for $L^2(\rr d)$.

\par

First we show \eqref{Eq:fWignerfMod1}. 
(Cf. \cite[Proposition~2.6]{Reich1}.)
On one hand we have $\nm f {M^{p,q}} \lesssim \nm {\mathbf c} {\ell^q}$
for any $p>0$ due to Proposition \ref{ConseqThmS} (1). 
Thus $f \in \underset {p>0}{\textstyle{\bigcap}} M^{p,q}(\rr d)$. 
On the other hand $f \notin M^{\infty ,q_0}(\rr d)$. 

\par

In fact, set $\phi_1(x) = (2 \pi)^{-d/2} e^{-\frac 14| x |^2}$ for
$x \in \rr d$.  Since
\begin{align*}
V_\phi f(0,\iota) = (2 \pi)^{-\frac d2} \sum _{\kappa \in \Lambda} c(\iota-\kappa)
\, e^{-\frac{1}{4}| \kappa |^2} = \mathbf c * \phi_1 (\iota )
\end{align*}
we obtain
\begin{multline*}
\nm {\mathbf c} {\ell^{q_0}}^{q_0}
= \sum_{\iota \in \Lambda} c(\iota)^{q_0}
\le (2 \pi)^{\frac{d}{2q_0}} \sum_{\iota \in \Lambda}
(\mathbf c * \phi_1(\iota ) )^{q_0}
\\[1ex]
= (2 \pi)^{\frac{d}{2q_0}}  \sum_{\iota \in \Lambda} \left| V_\phi f(0,\iota)  \right|^{q_0}
\le (2 \pi)^{\frac{d}{2q_0}}  \sum_{\iota \in \Lambda} \left( \sup_{j \in \Lambda}
\left| V_\phi f(j,\iota) \right| \right)^{q_0}
\\[1ex]
= (2 \pi)^{\frac{d}{2q_0}} \nm {V_\phi f} {\ell^{\infty,q_0}(\Lambda^2)}^{q_0}
\asymp \nm f {M^{\infty,q_0}}^{q_0}
\end{multline*}
again by Proposition \ref{ConseqThmS}. 
Thus it must hold $f \notin M^{\infty ,q_0}(\rr d)$, since otherwise we get the
contradiction $\mathbf c \in \ell ^{q_0}(\Lambda )$. 
We have now showed \eqref{Eq:fWignerfMod1}. 

\par

In order to prove \eqref{Eq:fWignerfMod2}, set $a=W_{f,\phi} \in \mascS'(\rr {2d})$. 
Since $\splM^{p,q}$
is increasing with respect to $p$ and $q$, it suffices to intersect in
\eqref{Eq:fWignerfMod2} over $0 < p \le \min(1,q)$. 
We have
\begin{equation*}
\nm a{\splM^{p,q}} \asymp \nm {V_{\Phi}W_{f,\phi}}{\ell ^{p,q}(\Lambda ^4)},
\end{equation*}
where $\Phi (x,\xi )=(2\pi )^{-\frac d2}e^{-(|x|^2+|\xi |^2)}$,
and 
\begin{equation*}
\Lambda ^4=\Lambda \times \Lambda \times \Lambda \times \Lambda \subseteq \rr {4d}. 
\end{equation*}

\par

By straightforward computations we get
\begin{multline*}
a(x,\xi ) = W_{f,\phi} (x,\xi )
\\[1ex]
= (2\pi )^{-\frac d2} \sum _{\kappa \in \Lambda} c (\kappa) \,  \pi ^{-\frac d2}
\int_{\rr d} e^{-\frac 12(|x-\frac y2|^2+|x+\frac y2|^2)}
e^{i(\scal x \kappa -\scal{y}{\xi-\frac \kappa 2})} \, dy
\\[1ex]
= 2^{\frac d2} \pi ^{-\frac d2} \sum _{\kappa \in \Lambda} c (\kappa)
\, e^{-|x|^2-|\xi -\frac \kappa 2|^2}e^{i\scal x\kappa} .
\end{multline*}
This gives
\begin{equation*}
V_\Phi a (x,\xi ,\eta ,y) = 2^{\frac d2} \pi ^{-\frac d2} 
\sum_{\kappa \in \Lambda} c (\kappa) F_\kappa (x,\xi ,\eta ,y)
\end{equation*}
where
\begin{multline*}
F_\kappa (x,\xi ,\eta ,y)
\\[1ex]
= (2\pi) ^{-\frac {3d}2}
\iint_{\rr {2d}} e^{-(|z|^2+|\zeta -\frac \kappa 2|^2 +|z-x|^2-|\zeta -\xi |^2)}
e^{-i(\scal z{\eta -\kappa}+\scal y\zeta)}\, dz d\zeta
\\[1ex]
=
2^{- \frac {5d}{2}} \pi^{-\frac d 2}  e^{-(\frac {|x|^2}2+\frac 12{|\xi -\frac \kappa 2|^2}
+\frac 18{|\eta -\kappa |^2} + \frac {|y|^2}8)}
e^{-\frac i2(\scal x{\eta -\kappa} +\scal y{\xi +\frac \kappa 2})}. 
\end{multline*}
Hence
\begin{multline}\label{Eq:STFTa}
V_\Phi a (x,\xi ,\eta ,y) 
\\[1ex]
= 2^{- 2d} \pi^{-d}
\sum _{\kappa \in \Lambda} c (\kappa)
e^{-(\frac {|x|^2}2+\frac 12{|\xi -\frac \kappa 2|^2}
+\frac 18{|\eta -\kappa |^2} + \frac {|y|^2}8)}
e^{-\frac i2(\scal x{\eta -\kappa} +\scal y{\xi +\frac \kappa 2})}.
\end{multline}

\par

If $q< \infty$ we get, in the third inequality using $p \le 1$, 
\begin{multline*}
\nm {W_{f,\phi}} {\splM^{p,q}}
= \nm a{\splM^{p,q}}\asymp \nm {\{ V_\Phi a (k_1,\kappa _1,\kappa _2,k_2)\}
_{k_j,\kappa _j\in \Lambda}}{\ell ^{p,q}(\Lambda ^4)}
\\[1ex]
\lesssim
\left ( \sum _{k_2,\kappa _2} \left ( \sum _{k_1,\kappa _1}\left (
\sum _\kappa c (\kappa) \, e^{-(\frac {|k_1|^2}2+\frac 12{|\kappa _1 -\frac \kappa 2|^2}
+\frac 18{|\kappa _2 -\kappa |^2} + \frac {|k_2|^2}8)}
\right )^p  \right )^{\frac qp}\right ) ^{\frac 1q}
\\[1ex]
\asymp
\left ( \sum _{\kappa _2} \left ( \sum _{\kappa _1}\left (
\sum _\kappa c(\kappa) \, e^{-(\frac 12{|\kappa _1 -\frac \kappa 2|^2}
+\frac 18{|\kappa _2 -\kappa |^2})}
\right )^p\right )^{\frac qp}\right ) ^{\frac 1q}
\\[1ex]
\le
\left ( \sum _{\kappa _2} \left (
\sum _{\kappa ,\kappa _1} c(\kappa)^p \, e^{-(\frac p2{|\kappa _1 -\frac \kappa 2|^2}
+\frac p8{|\kappa _2 -\kappa |^2})}
\right )^{\frac qp}\right ) ^{\frac 1q}
\\[1ex]
\asymp
\left ( \sum _{\kappa _2} \left (
\sum _{\kappa } c(\kappa)^p \, e^{-\frac p8{|\kappa _2 -\kappa |^2}}
\right )^{\frac qp}\right ) ^{\frac 1q}
=
\left (
\nm { \{c^p \} * e^{-\frac p8|\cdo |^2}}{\ell ^{\frac qp}}
\right )^{\frac 1p}
\\[1ex]
\le
\left (
\nm  {\{c^p \} }{\ell ^{\frac qp}} \nm {e^{-\frac p8|\cdo |^2}}{\ell ^1}
\right )^{\frac 1p}
\asymp
\nm  {\mathbf c }{\ell ^q} <\infty ,
\end{multline*}
using Young's inequality. The result follows if $q< \infty$.
If $q=\infty$ a similar argument proves the result.
\end{proof}

The preceding lemma is needed in the proof of Theorem
\ref{Thm:SharpnessBiLinCase} below on necessary conditions
for continuity. We aim at conditions on the exponents $p_j$,
$q_j$, $j=0,1,2$, that are necessary for 
\begin{equation}\label{Eq:CompContRel0}
\nm {a \wpr _A b}{M^{p_0,q_0}_{(\omega _0)}} \lesssim
\nm {a}{M^{p_1,q_1}_{(\omega _1)}}\nm {b}{M^{p_2,q_2}_{(\omega _2)}}
\end{equation}
to hold for all $a, b \in \mathscr S ({\rr {2d}})$, for certain weight
functions $\omega_j$, $j=0,1,2$.  We restrict to weights of
polynomial type. 

\par

By \cite[Proposition~2.8]{To14} it suffices to prove the result in
the Weyl case $A=1/2$,
and then \eqref{Eq:CompContRel0} in terms of symplectic
modulation spaces is 
\begin{equation}\label{Eq:CompContRel1}
\nm {a \wpr b}{\splM^{p_0,q_0}_{(\omega _0)}} \lesssim
\nm {a}{\splM^{p_1,q_1}_{(\omega _1)}}\nm {b}{\splM^{p_2,q_2}_{(\omega _2)}},
\quad a, b \in \mathscr S ({\rr {2d}}).
\end{equation}
The conditions on the weights \eqref{Eq:WeightCondBilinCase} and
\eqref{Ttdef2} are then transformed into
\begin{equation}\label{Eq:Weylweights}
\omega_0(Z+X,Z-X) \lesssim \omega_1(Y+X,Y-X) \,
\omega_2(Z+Y,Z-Y), \quad X,Y,Z \in \rr {2d}. 
\end{equation}
(Cf. \cite{Cordero1,Holst1}.)

\par

We will consider weights with the particular structure
\begin{equation}\label{weightcond2}
\begin{aligned}
\omega _0(X,Y) &= \frac {\vartheta _2(X-Y)}{\vartheta _0(X+Y)},\quad
\omega _1(X,Y)   = \frac {\vartheta _2(X-Y)}{\vartheta _1(X+Y)},
\\[1ex]
\omega _2(X,Y) &= \frac {\vartheta _1(X-Y)}{\vartheta _0(X+Y)},\quad
\end{aligned}
\end{equation}
for $\vartheta _j\in \mascP (\rr {2d})$, $j=0,1,2$. Then
\eqref{Eq:Weylweights} is automatically satisfied. 
Without loss we may assume $\vartheta _j \in C^\infty$
\cite[Remark~2.18]{Holst1}. 

\par

For $\vartheta \in  \mascP (\rr {2d})$ let $S^{(\vartheta)}(\rr {2d})$ denote
the space of smooth symbols on $\rr {2d}$ such that 
$(\partial^\alpha a)/\vartheta \in L^\infty$ for any $\alpha \in \nn{2d}$. 

\par

\begin{lemma}\label{lemweightreduction}
Let $p,q \in (0,\infty]$, let $\vartheta _j\in \mascP (\rr {2d})$, $j=1,2$ and suppose 
$\omega(X,Y) = \vartheta_2(X-Y)/\vartheta_1(X+Y)$. 
Then there exist $a_j \in S^{(\vartheta_j)}(\rr {2d})$ and
$b_j \in S^{(1/\vartheta_j)}(\rr {2d})$, $j=1,2$ such that 
\begin{equation}\label{eq:weylprod1}
a_j \wpr b_j = b_j \wpr a_j = 1, \quad j=1,2,  
\end{equation}
and the map $a \mapsto a_2 \wpr a \wpr b_1$ is continuous on
$\mathscr S(\rr {2d})$ and extends uniquely to a homeomorphism from 
$\splM^{p,q}_{(\omega)}(\rr {2d})$ to $\splM^{p,q}(\rr {2d})$. 
\end{lemma}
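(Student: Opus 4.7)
The plan is to (i) construct $a_j$ and $b_j$ as mutually inverse Weyl-product elements by combining a symbolic-calculus parametrix with a positivity correction, (ii) prove continuity of $a \mapsto a_2 \wpr a \wpr b_1$ by two applications of Theorem \ref{Thm:BilinCase}, and (iii) establish bijectivity by exhibiting $b \mapsto b_2 \wpr b \wpr a_1$ as a two-sided inverse.

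For (i), by \cite[Remark~2.18]{Holst1} one may take $\vartheta_j \in C^\infty(\rr {2d})$. Adding a large positive constant if necessary, one arranges that $\vartheta_j \ge \delta_j > 0$ and that $\op^w(\vartheta_j)$ is a positive self-adjoint operator on $L^2(\rr d)$ bounded below by $\delta_j$. The standard symbolic calculus for polynomially moderate symbol classes yields a parametrix $\tilde b_j \in S^{(1/\vartheta_j)}$ with $\vartheta_j \wpr \tilde b_j = 1 - r_j$ and $r_j \in \mascS(\rr {2d})$. Positivity and ellipticity then promote $\tilde b_j$ to an exact two-sided Weyl inverse $b_j \in S^{(1/\vartheta_j)}$, either via a Neumann series in a Banach subalgebra of Weyl operators with symbols in $S^{(1)}$, or by Shubin-type functional calculus applied to $\op^w(\vartheta_j)$. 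Setting $a_j := \vartheta_j$ then gives \eqref{eq:weylprod1}.

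For (ii), the basic technical input is that $c \in S^{(\vartheta)}(\rr {2d})$ belongs to $\splM^{\infty,r}_{(\mu)}(\rr {2d})$ for every $r > 0$ and every weight of the form $\mu(X,Y) = \vartheta(X)^{-1}\eabs{Y}^{N}$ with $N$ arbitrarily large; this follows from the decay estimate $|\maclV_\Phi c(X,Y)| \lesssim \vartheta(X) \eabs{Y}^{-M}$, valid for every $M$, obtained by integration by parts in the symplectic STFT against a Schwartz window. In particular $a_2$ and $b_1$ lie in such spaces with $\vartheta$ equal to $\vartheta_2$ and $1/\vartheta_1$ respectively. Setting $r = \min(1,p,q)$ and applying Theorem \ref{Thm:BilinCase} first to $a \wpr b_1$ with exponent triple $((p,q), (\infty,r), (p,q))$ produces an intermediate bound in $\splM^{p,q}_{(\omega')}(\rr {2d})$ with $\omega'(U,V) = \vartheta_2(U-V)$; a second application to $a_2 \wpr (\cdot)$ then yields the bound in $\splM^{p,q}(\rr {2d})$. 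At each step the required weight inequality \eqref{Eq:Weylweights} reduces, by choosing the free parameter appropriately (e.g.\ $Y = Z$ or $Y = -X$) and absorbing polynomial factors into $\eabs{\cdo}^N$ via the polynomial moderateness of $\vartheta_1,\vartheta_2$, to a trivial pointwise estimate.

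For (iii), the candidate inverse $b \mapsto b_2 \wpr b \wpr a_1$ is handled by the mirror computation: $a_1 \in S^{(\vartheta_1)}$ and $b_2 \in S^{(1/\vartheta_2)}$ again lie in appropriate $\splM^{\infty,r}$-spaces, and a two-fold application of Theorem \ref{Thm:BilinCase} gives continuity from $\splM^{p,q}(\rr {2d})$ into $\splM^{p,q}_{(\omega)}(\rr {2d})$ with analogous weight bookkeeping. Associativity of $\wpr$ on $\mascS(\rr {2d})$ combined with \eqref{eq:weylprod1} then makes the two compositions act as the identity on $\mascS(\rr {2d})$; this extends to the full modulation spaces by density when $p,q < \infty$ and by weak-$*$ arguments otherwise. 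The main obstacle will be step (i): producing an exact two-sided Weyl inverse in $S^{(1/\vartheta_j)}$ rather than a mere parametrix modulo $\mascS$ requires genuine positivity and ellipticity input, which is precisely where the reduction to smooth $\vartheta_j$ and the polynomial-type assumption become essential.
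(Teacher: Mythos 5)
Your steps (ii) and (iii) essentially reproduce the paper's route: the identification $S^{(\vartheta)}(\rr {2d}) = \bigcap_{N}\splM^{\infty,r}_{(1/\vartheta_N)}(\rr {2d})$ with $\vartheta_N(X,Y)=\vartheta(X)\eabs Y^{-N}$ (this is \cite[Remark~2.18]{Holst1}, extended to all $r>0$), followed by two applications of Theorem \ref{Thm:BilinCase} with exponent triples $(\infty,r)$ and $(p,q)$, $r=\min(1,p,q)$, and the inverse map $b\mapsto b_2\wpr b\wpr a_1$ handled as in \cite[Lemma~3.3]{Cordero1}. One small correction there: the weight hypothesis \eqref{Eq:Weylweights} must be verified for \emph{all} $X,Y,Z$; you do not get to set $Y=Z$ or $Y=-X$. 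The verification in fact goes through for all $X,Y,Z$ because, e.g., $\vartheta_1(2Y)\lesssim \vartheta_1(Z+Y)\eabs{Z-Y}^{N}$ by polynomial moderateness with $N$ large, so the conclusion is fine even though the phrasing is not.

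The genuine gap is your step (i). The paper does not construct $a_j,b_j$ at all; it invokes \cite[Corollary~6.6]{Bony1} (Bony--Chemin, Weyl--H\"ormander calculus), which is precisely the nontrivial statement that an exact two-sided $\wpr$-inverse exists with symbol in $S^{(1/\vartheta_j)}$. Your sketch does not replace this. First, adding a constant to $\vartheta_j$ is not harmless: if $\vartheta_j$ is not bounded below by a constant (only $\eabs X^{-N_0}\lesssim\vartheta_j$ is guaranteed for polynomial-type weights), then $\vartheta_j+C\notin S^{(\vartheta_j)}$, so you leave the required symbol class. Second, the class $S^{(\vartheta_j)}$ has no gain: derivatives are only bounded by $\vartheta_j$ itself, so the asymptotic expansion for $\wpr$ does not improve from term to term and there is no parametrix with remainder in $\mascS(\rr {2d})$ (already $\vartheta_j\equiv 1$ shows the expansion gains nothing). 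Third, even granting $L^2$-invertibility of $\op^w(a_j)$ (which itself needs a G\aa rding-type argument, since positivity of the symbol does not give positivity of the Weyl operator), a Neumann series or functional calculus only produces a bounded inverse operator; the hard point is spectral invariance, i.e. that the inverse has Weyl symbol in $S^{(1/\vartheta_j)}$, and that is exactly the content of the Bony--Chemin result you would be re-proving. So either cite that result, as the paper does, or supply a genuine Wiener-property argument for these symbol classes; as written, \eqref{eq:weylprod1} is not established.
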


\par

\begin{proof}
According to \cite[Corollary~6.6]{Bony1} there exist
$a_j \in S^{(\vartheta_j)}(\rr {2d})$ and $b_j \in S^{(1/\vartheta_j)}(\rr {2d})$, $j=1,2$, 
such that \eqref{eq:weylprod1} is satisfied. 

\par

By \cite[Remark~2.18]{Holst1} we have
\begin{equation*}
S^{(\vartheta)}(\rr {2d}) = \bigcap_{N \ge 0}
\splM_{(1/\vartheta _N)}^{\infty,r}(\rr {2d}),\qquad
\vartheta _N(X,Y) = \vartheta(X) \eabs Y^{-N},
\end{equation*}
for any $\vartheta \in \mascP (\rr {2d})$ and any $r>0$ . More precisely
the remark gives the equality for $r=1$, and for general $r>0$,
the equality follows from the embeddings
\begin{equation*}
M^{\infty ,r_2}_{(1/v_{N+N_0})}\subseteq M^{\infty ,r_1}_{(1/v_N)}
\subseteq M^{\infty ,r_2}_{(1/v_N)},
\quad \text{when}\quad
r_1<r_2,\ N_0> 2d \left( \frac{1}{r_1} - \frac{1}{r_2}\right). 
\end{equation*}

\par

If we set $r = \min(1,p,q)$ then $p_1=\infty$, $q_1=r$, $p_2=p$, $q_2=q$, 
as well as $p_2=\infty$, $q_2=r$, $p_1=p$, $q_1=q$, satisfy the conditions 
\eqref{Eq:CaseLesspjp}, and \eqref{EqCaseqLessp} or \eqref{EqCasepLessq}
of Theorem \ref{Thm:BilinCase}. 

\par

From these observations the result follows from Theorem \ref{Thm:BilinCase}
and a repetition of the arguments in  the proof of \cite[Lemma~3.3]{Cordero1}. 
\end{proof}

\par

\begin{thm}\label{Thm:SharpnessBiLinCase}
Let $p_j,q_j\in (0,\infty ]$, suppose $\omega _j\in \mascP (\rr {4d})$, $j=0,1,2$,
are given by \eqref{weightcond2} where $\vartheta _j\in \mascP (\rr {2d})$, $j=0,1,2$. 
If \eqref{Eq:CompContRel1} holds then
\begin{equation}\label{Eq:LebExpNec}
\frac 1{p_0}\le \frac 1{p_1}+\frac 1{p_2},\quad
\frac 1{p_0}\le \frac 1{q_1}+\frac 1{q_2}
\quad \text{and}\quad
q_1,q_2\le q_0.
\end{equation}
\end{thm}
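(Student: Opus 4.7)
The plan is to reduce \eqref{Eq:CompContRel1} to the unweighted case via Lemma \ref{lemweightreduction}, and then to test the resulting estimate against Wigner-type symbols so that each of the three conditions in \eqref{Eq:LebExpNec} emerges as necessary.

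The weight structure \eqref{weightcond2} is designed precisely so that the three weighted spaces $\splM^{p_j,q_j}_{(\omega_j)}$ can be simultaneously transported onto their unweighted counterparts by sandwiching $a$, $b$ and $a \wpr b$ between invertible elliptic Weyl factors $a_j \wpr \cdot \wpr b_j$ produced by Lemma \ref{lemweightreduction}. Conjugating \eqref{Eq:CompContRel1} by these three homeomorphisms reduces matters to the unweighted estimate
\begin{equation}\label{NC:red}
\nm{a \wpr b}{\splM^{p_0,q_0}} \lesssim
\nm{a}{\splM^{p_1,q_1}} \nm{b}{\splM^{p_2,q_2}}, \qquad a,b \in \mascS(\rr{2d}).
\end{equation}
The central tool is then the rank-one identity $\op^w(W_{h,k}) u = c\,(u,k) h$. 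For the normalized Gaussian $\phi(x) = \pi^{-d/4} e^{-|x|^2/2}$, setting $a = W_{f,\phi}$ and $b = W_{\phi,g}$ gives $\op^w(a) \circ \op^w(b) u = C\,(u,g) f$, so $a \wpr b = C'\, W_{f,g}$ for a nonzero constant $C'$. Moreover, with the Gaussian window $\Phi = W_{\phi,\phi} \in \maclS_{1/2}(\rr{2d})$, the symplectic STFT $\maclV_\Phi W_{f,g}$ factors, up to a symplectic change of variables and a unimodular phase, as $V_\phi f(z+\zeta/2)\,\overline{V_\phi g(z-\zeta/2)}$; the special case $g = \phi$ is visible in the explicit formula \eqref{Eq:STFTa}. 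Consequently $\nm{W_{f,g}}{\splM^{p,q}}$ admits two-sided bounds in terms of mixed norms of $V_\phi f$ and $V_\phi g$, and \eqref{NC:red} can be rewritten as an inequality between such mixed norms.

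The three conditions are then extracted by choosing $f$ and $g$ appropriately. For $q_1 \le q_0$ I would fix $g = \phi$, so $b$ becomes a fixed nonzero Schwartz symbol and \eqref{NC:red} reduces to $\nm{W_{f,\phi}}{\splM^{p_0,q_0}} \lesssim \nm{W_{f,\phi}}{\splM^{p_1,q_1}}$; feeding in $f$ produced by Lemma \ref{lemFseries} with a sequence $\mathbf c \in \ell^{q_1}\setminus \ell^{q_0}$ makes the right-hand side finite, while the negative assertion $f \notin M^{\infty,q_0}$ from \eqref{Eq:fWignerfMod1}, transported through the factorization of $\maclV_\Phi W_{f,\phi}$, forces the left-hand side to be infinite, contradicting the inequality unless $q_1 \le q_0$. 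The bound $q_2 \le q_0$ follows by symmetry. The inequality $1/p_0 \le 1/p_1 + 1/p_2$ is obtained by a standard dilation argument $a(X) = a_0(\lambda X)$, $b(X) = b_0(\lambda X)$ and comparing the scaling exponents of each norm as $\lambda \to 0$ and $\lambda \to \infty$. Finally, the mixed inequality $1/p_0 \le 1/q_1 + 1/q_2$ is obtained by testing with both $f$ and $g$ of the type in Lemma \ref{lemFseries}, where the symplectic twist inherent in the Weyl product transmutes the $q$-behaviour of each factor into the $p$-behaviour of the product $W_{f,g}$.

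The main obstacle is this last, mixed, inequality: it reflects a genuine symplectic coupling between position and frequency variables that is invisible at the level of the two marginal conditions, and making it precise requires choosing the lattices and coefficient sequences in Lemma \ref{lemFseries} so that the STFT \eqref{Eq:STFTa} of $W_{f,g}$ disperses in the frequency variable at precisely the rate dictated by the joint size of the two coefficient sequences, thereby linking the $p_0$-norm of the product to the $q_1$- and $q_2$-norms of the two factors.
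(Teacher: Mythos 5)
Your reduction to the unweighted case via Lemma \ref{lemweightreduction} and your use of rank-one Wigner symbols together with Lemma \ref{lemFseries} for the condition $q_1\le q_0$ are the same ingredients the paper uses, but two of the three necessities are not actually proved in your proposal. First, for $q_1\le q_0$ you need the left-hand side $\nm{W_{f,\phi}}{\splM^{p_0,q_0}}$ to be infinite, and you derive this from a claimed two-sided bound of $\nm{W_{f,g}}{\splM^{p,q}}$ by ``mixed norms of $V_\phi f$ and $V_\phi g$''. That claim is not correct as stated: after the change of variables in the factorization $|\maclV_\Phi W_{f,g}(z,\zeta)|=|V_\phi f(z+\hat\zeta/2)|\,|V_\phi g(z-\hat\zeta/2)|$, the $\splM^{p,q}$ quasi-norm becomes a convolution-type expression in $|V_\phi f|^{p}$ and $|V_\phi g|^{p}$, not a product of norms of $f$ and $g$; moreover Lemma \ref{lemFseries} only supplies upper bounds (membership $W_{f,\phi}\in\bigcap_p\splM^{p,q}$), never the lower bound you need. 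For the special choice $g=\phi$ a lower bound of the type $\nm{W_{f,\phi}}{\splM^{p_0,q_0}}\gtrsim\nm{\mathbf c}{\ell^{q_0}}$ can indeed be extracted by a computation like \eqref{Eq:STFTa}, but this is a missing step, not a consequence of anything you cite. The paper avoids this entirely by passing to the operator level: from $\op^w(a\wpr b)\phi=c\,f\notin M^{\infty,q_0}$ it gets a contradiction with the boundedness theorems for operators with symbols in $\splM^{\infty,q_0}$ (\cite[Theorem~3.1]{To13} for $q_0\le 1$, \cite[Theorem~4.3]{Toft2} for $q_0\ge 1$), so no lower bound on a symbol norm is ever needed.

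Second, the inequalities $1/p_0\le 1/p_1+1/p_2$ and $1/p_0\le 1/q_1+1/q_2$ are not established. Your ``standard dilation argument'' ignores that the Weyl product does not commute with dilations: $a_0(\lambda\cdot)\wpr b_0(\lambda\cdot)$ is not a dilate of $a_0\wpr b_0$, and this non-commutation is exactly the source of the mixed $q$-condition. The paper handles both conditions at once with the explicit Gaussian family $a_\lambda(x,\xi)=e^{-\lambda(|x|^2+|\xi|^2)}$, for which both $\nm{a_\lambda}{\splM^{p,q}}$ and $a_\lambda\wpr a_\mu$ are computed exactly (following \cite{Holst1,Cordero1}); the ratio in \eqref{Eq:CompContRel2} then behaves like $\lambda^{1/p_1+1/p_2-1/p_0}$ as $\lambda\to 0$ and like $\lambda^{1/p_0-1/q_1-1/q_2}$ as $\lambda\to\infty$, which yields precisely the two inequalities. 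Your alternative plan for the mixed condition---testing with two lattice-Gaussian sums from Lemma \ref{lemFseries} and tracking how the frequency dispersion of $W_{f,g}$ couples the two coefficient sequences---is only sketched, and you yourself flag it as the main obstacle; as written it contains no estimate that produces $1/p_0\le 1/q_1+1/q_2$. So the proposal proves (modulo the lower-bound gap above) only $q_1,q_2\le q_0$, and a complete argument would either have to carry out the Gaussian computation of the paper or fill in the two missing quantitative estimates.
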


\par

\begin{proof}
By Lemma \ref{lemweightreduction} the estimate \eqref{Eq:CompContRel1} 
with weights \eqref{weightcond2} implies
\begin{equation}\label{Eq:CompContRel2}
\nm {a \wpr b}{\splM^{p_0,q_0}} \lesssim
\nm {a}{\splM^{p_1,q_1}} \nm {b}{\splM^{p_2,q_2}}, \quad
a\in \splM^{p_1,q_1}(\rr {2d}),\  b \in \mathscr S(\rr {2d}).
\end{equation}
It thus suffices to prove the result for $\omega _j \equiv 1$, $j=0,1,2$. 

\par

Let $a_{\lambda ,\mu} (x,\xi ) = e^{-\lambda |x|^2-\mu |\xi |^2}$ and
$a_\lambda = a_{\lambda ,\lambda}$, for 
$\mu ,\lambda >0$. 
Then by the proof of \cite[Proposition~3.1]{Holst1} (cf. \cite[Section~3]{Cordero1})
\begin{equation*}
\| a_\lambda \|_{\splM^{p,q}}^{1/d} 
= \pi ^{\frac 1p+\frac 1q - 1} p^{-\frac 1p}
q^{-\frac 1q}\lambda ^{-\frac 1p}(1+\lambda )^{\frac 1p+\frac 1q-1}
\end{equation*}
and
\begin{equation*}
a_\lambda \wpr a_\mu (X)
= (1+\lambda \mu)^{-d} \exp\left(-|X|^2 \frac{\lambda+\mu}{1+\lambda
\mu}\right).
\end{equation*}
Hence
\begin{equation*}
\begin{split}
\| a_\lambda \wpr a_\lambda \|_{\splM^{p,q}}^{1/d}
& = \pi^{1/p+1/q-1} p^{-1/p} q^{-1/q} 
\\[1ex]
& \qquad \times (1+\lambda^2)^{-1/q} (2 \lambda)^{-1/p} (1+\lambda)^{2(1/p+1/q-1)}. 
\end{split}
\end{equation*}
Thus
\begin{multline*}
\left( \frac{\| a_\lambda \wpr a_\lambda \|_{\splM^{p_0,q_0}}}
{\| a_\lambda  \|_{\splM^{p_1,q_1}} \| a_\lambda  \|_{\splM^{p_2,q_2}}}
\right)^{1/d}
\\[1ex] 
= C \lambda^{\frac{1}{p_1}+\frac{1}{p_2}-\frac{1}{p_0}}
(1+\lambda^2)^{-\frac{1}{q_0}} (1+\lambda)^{\frac{2}{p_0}
- \frac{1}{p_1} - \frac{1}{p_2} + \frac{2}{q_0} - \frac{1}{q_1}
- \frac{1}{q_2}}. 
\end{multline*}
for some constant $C>0$ which does not depend on $\lambda$.
The right hand side behaves like $\lambda^{\frac{1}{p_0}-\frac{1}{q_1}-\frac{1}{q_2}}$
when $\lambda$ is large, 
and like $\lambda^{\frac{1}{p_1}+\frac{1}{p_2}-\frac{1}{p_0}} $ when $\lambda$ is small. 
The continuity \eqref{Eq:CompContRel2} hence implies the necessary conditions 
\begin{equation*}
\frac 1{p_0}\le \frac 1{q_1}+\frac 1{q_2}, \quad \frac 1{p_0}\le \frac 1{p_1}+\frac 1{p_2}. 
\end{equation*}

It remains to show $q_1,q_2\le q_0$. 
Since $\overline{a_1 \wpr a_2} = \overline{a}_2 \wpr \overline{a}_1$ (cf. \cite{Holst1}), 
it suffices to show $q_1 \le q_0$. 
We give a proof by contradiction. Suppose \eqref{Eq:CompContRel2} holds
and $q_0 < q_1$.
Let $\Lambda \subseteq \rr d$ be a lattice,
\begin{align*}
\phi(x) &= \pi ^{-d/4} e ^{-|x| ^2 /2},
\qquad
\bold c = \{c (\kappa )\} _{\kappa \in \Lambda} \in \ell^{q _1}(\Lambda)
\setminus  \ell^{q _0}(\Lambda),
\intertext{and let}
f(x) &= \sum _{\kappa \in \Lambda}  c (\kappa ) e ^{i \eabs {x, \kappa}} \phi (x).
\end{align*}
Then
\begin{gather*}
f \in \bigcap _{p _1 > 0} M ^{p _1, q _1}(\rr {d}) \setminus M ^{\infty, q_0}(\rr {d}),
\\[1ex]
a =W _{f, \phi} \in \bigcap _{p _1 > 0}  \splM ^{p _1, q _1}(\rr {2d})
\quad \text{and} \quad 
b =W _{\phi, \phi} \in \mascS (\rr {2d}),
\end{gather*}
by Lemma \ref{lemFseries}. Since
$$
\op ^w (a) g = (2 \pi )^{-d /2}(g, \phi) f
\quad \text{and}\quad
\op ^w (b) g = (2 \pi )^{-d /2}(g, \phi) \phi ,
$$
it follows that
\begin{equation*}
\op ^w (a \# b) \phi = (2 \pi) ^{-d} \|\phi\| _{L ^2} ^4 f
\in \bigcap _{p _1 > 0} M ^{p _1, q _1}(\rr {d}) \setminus
 M ^{\infty, q_0}(\rr {d}).
\end{equation*}
Therefore $\op ^w (a \# b)$ is not continuous from $\mascS (\rr d)$ 
to $M ^{\infty, q_0}(\rr d)$. 

\par

On the other hand we have by assumption 
\begin{equation*}
a \# b \in \splM ^{p _0, q_0} \subseteq \splM ^{\infty, q_0}.
\end{equation*}

\par

If $q _0 \in (0, 1]$, then $\op ^w (a \# b)$ is continuous
from $M ^{p _0, q_0}$ to $M ^{p _0, q_0}$ when $p _0 \in [q _0, \infty]$,
by \cite [Theorem 3.1]{To13}. This contradicts the fact that  
$\op ^w (a \# b)$ is not continuous from $\mascS$ 
to $ M ^{\infty, q_0}$. Hence the assumption $q_0 < q_1$
must be false.

\par

If instead $q _0 \in [1, \infty]$, then by \cite [Theorem 4.3]{Toft2}
$\op ^w (a \# b)$ is continuous from $M ^{1, 1}$
to $M ^{q _0, q_0}$, which
again contradicts the fact that
$\op ^w (a \# b)$ is not continuous from $\mascS$ 
to $ M ^{\infty, q_0}$.
Hence the assumption $q_0 < q_1$ is again false. 

Thus we must have $q_1 \le q_0$. 
\end{proof}

\begin{rem}
Let $\mathscr P_E^0(\rr d)$ denote all $\omega \in \mathscr P_E(\rr d)$
such that $\omega$ is $v$-moderate for a submultiplicative weight $v$ satisfying
\begin{equation*}
v(x) \lesssim e^{r|x|}, \quad x \in \rr d, 
\end{equation*}
for all $r>0$. 
Then $\mathscr P(\rr d) \subsetneq \mathscr P_E^0(\rr d)$. 
By using the new \cite[Theorem~4.1]{Abdeljawad1} instead of Lemma \ref{lemweightreduction} it follows that Theorem \ref{Thm:SharpnessBiLinCase}
holds for $\vartheta_j \in \mathscr P_E^0(\rr {2d})$ and $\omega_j$ defined by \eqref{weightcond2}.
The space $\mascS$ in \eqref{Eq:CompContRel1} is then replaced by $\maclS_1$. 
\end{rem}

\begin{rem}
For Banach modulation spaces with exponents $p_j, q_j$ restricted to $[1,\infty]$ we have found that the following conditions are necessary and sufficient for continuity of the Weyl product \cite[Theorems 0.1 and 3.1]{Cordero1}. 
\begin{align}
\frac1{p_0} & \le \frac1{p_1} + \frac1{p_2},
\label{suff1}
\\[1ex]
q_1, q_2 & \le q_0, \quad 
1 \le \frac1{q_1}  +  \frac1{q_2}, 
\label{suff3}
\\[1ex]
\frac1{p_0} + \frac1{q_0} & \leq \frac1{q_1} + \frac1{q_2}, \quad
1 + \frac1{q_0}  \leq \frac1{q_1} + \frac1{q_2} + \frac1{p_j}, \quad j=1,2, 
\label{suff4}
\\[1ex]
1 +  \frac1{p_0} + \frac1{q_0} & \leq \frac1{q_1} + \frac1{q_2} + \frac1{p_1} + \frac1{p_2}. 
\label{suff5}
\end{align}

In this paper we have worked with exponents $p_j, q_j$ in the full range $(0,\infty]$. 
The sufficient conditions in Theorem \ref{Thm:BilinCase}
and the necessary conditions in Theorem \ref{Thm:SharpnessBiLinCase} are not equal, as conditions \eqref{suff1} -- \eqref{suff5} are for 
exponents in $[1,\infty]$. 

In fact, consider the inclusion 
\begin{equation*}
M^{1,2} \wpr M^{1,2} \subseteq M^{\infty,2} 
\end{equation*}
which holds since the exponents satisfy \eqref{suff1} -- \eqref{suff5}. 
They do however not satisfy \eqref{Eq:CaseLesspjp}, and \eqref{EqCaseqLessp} or \eqref{EqCasepLessq}. 
Hence the sufficient conditions in Theorem \ref{Thm:BilinCase} are not all necessary. 
\end{rem}

\section*{Appendix}\label{sec:appendix}

In this appendix we prove the formula 
\begin{equation}\tag*{(A.1)}
\mathscr F(f\, \overline{\phi})(\xi ) =
(2\pi )^{-d/2} (f,\phi \, e^{i\scal \cdo \xi}), \quad f \in \mathcal S_{s}'(\rr d), \quad
\phi \in \mathcal S_{s}(\rr d), \quad \xi \in \rr d, 
\end{equation}
for $s \geq 1/2$,  which we claimed to be true in the definition of the
STFT \eqref{defstft}.  There is a parallel formula for $f \in \Sigma _{s}'
(\rr d)$, $\phi \in \Sigma_{s}(\rr d)$ and $s> 1/2$, that we also prove. 

\par

Let $f \in \mathcal S_{s}'(\rr d)$, let $\phi \in \mathcal S_{s}(\rr d)$ and denote 
\begin{equation}\tag*{(A.2)}
u(\xi ) =
(2\pi )^{-d/2} (f,\phi \, e^{i\scal \cdo \xi}) = (2\pi )^{-d/2} \langle f,
\overline{\phi} \, e^{-i\scal \cdo \xi} \rangle, \quad \xi \in \rr d. 
\end{equation}
Then $u \in C^\infty(\rr d)$. We need the following estimate (cf. \cite{ChuChuKim}). 

\renewcommand{\rubrik}{Lemma A.1}

\begin{tom} 
The function {\rm (A.2)} satisfies the estimate
\begin{equation*}
| u (\xi)| \lesssim e^{c |\xi|^{1/s}}, \quad \xi \in \rr d,  
\end{equation*}
for any $c>0$. 
\end{tom}

\par

\begin{proof}
By \eqref{GSspacecond1} $\phi \in \mathcal S_{s,h}(\rr d)$ for all $h \geq h_0$ where $h_0 > 0$. 
Let $c>0$ and set 
\begin{equation*}
h = \max \left( h_0, \left( \frac{ds}{c} \right)^s \right). 
\end{equation*}

Let $\alpha,\beta \in \nn d$. 
Using $|\gamma|! \leq d^{|\gamma|} \gamma!$ (cf. \cite[Eq.~(0.3.3)]{Nicola1})
and $\sum_{\gamma \leq \beta} \binom{\beta}{\gamma} = 2^{|\beta|}$ we estimate for $x,\xi \in \rr d$
\begin{align*}
\frac{ \left|  x^\alpha \partial_x^\beta ( e^{i\scal x \xi} \phi(x) ) \right|}{(\alpha! \beta!)^s (2h)^{|\alpha+\beta|}}
& \leq 2^{-|\alpha+\beta|} \sum_{\gamma \leq \beta} \binom{\beta}{\gamma} \frac{|\xi|^{|\gamma|} \ h^{-|\gamma|}}{\gamma!^s} 
\frac{\left|  x^\alpha \partial^{\beta-\gamma} \phi(x) \right|}{(\alpha! (\beta-\gamma)!)^s h^{|\alpha+\beta-\gamma|}} \binom{\beta}{\gamma}^{-s} \\
& \leq \nm \phi {\mathcal S_{s,h}} 2^{-|\alpha+\beta|} \sum_{\gamma \leq \beta} \binom{\beta}{\gamma}
\left( \frac{ \left(\frac{c}{s}|\xi|^{1/s} \right)^{|\gamma|}}{|\gamma|!} \right)^s \left( \frac{(d s)^s}{h c^s} \right)^{|\gamma|} \\
& \lesssim 2^{-|\alpha+\beta|} \sum_{\gamma \leq \beta} \binom{\beta}{\gamma}
\left( \frac{ \left(\frac{c}{s}|\xi|^{1/s} \right)^{|\gamma|}}{|\gamma|!} \right)^s \\
& \leq e^{c |\xi|^{1/s}} 2^{-|\alpha+\beta|} \sum_{\gamma \leq \beta} \binom{\beta}{\gamma} \\
& \leq e^{c |\xi|^{1/s}}. 
\end{align*}

This implies
\begin{equation*}
\nm {\phi \, e^{i\scal \cdo \xi}} {\mathcal S_{s,2h}} \lesssim e^{c |\xi|^{1/s}}, \quad \xi \in \rr d, 
\end{equation*}
which via \eqref{GSspacecond1}$'$ finally gives the estimate
\begin{equation*}
| u (\xi)| \lesssim \nm {\phi \, e^{i\scal \cdo \xi}} {\mathcal S_{s,2h}} \lesssim e^{c |\xi|^{1/s}}, \quad \xi \in \rr d.   
\end{equation*}
\end{proof}

\par
 
The formula (A.1) amounts to the claim $\mathscr F(f\, \overline{\phi}) = u$. 

\par

A priori $\mathscr F(f\, \overline{\phi}) \in \mathcal S_{s}'(\rr d)$ is the distribution 
\begin{equation*}
\langle \mathscr F(f\, \overline{\phi}), g \rangle = \langle f, \overline{\phi}
\, \widehat g \rangle, \quad g \in \mathcal S_{s}(\rr d). 
\end{equation*}
To prove our claim $\mathscr F(f\, \overline{\phi}) = u$ we must therefore show 
\begin{equation}\tag*{(A.3)}
\langle f, \overline{\phi} \, \widehat g \rangle = \int_{\rr d} u(x) \, g(x) \, dx
= (2\pi )^{-d/2} \int_{\rr d} \langle f, \overline{\phi} \, g(x) \, e^{-i\scal
\cdo x} \rangle \, dx, \quad g \in \mathcal S_{s}(\rr d).
\end{equation}

\par

Note that the integral is well defined due to Lemma A.1 and the estimate
for $g \in \maclS _{s}(\rr d)$ \cite[Lemma~1.6]{Toft8}
\begin{equation*}
|g(x)| \lesssim e^{-\ep |x|^{1/s}}, \quad x \in \rr d, 
\end{equation*}
which is valid for some $\ep>0$. 

\par

In view of the definition of the Fourier transform $\widehat g$,
formula (A.3) is true provided we can switch order in
the action of the distribution $1 \otimes f \in \maclS _{s}'(\rr {2d})$
with respect to the first and second $\rr d$ variable, 
when it acts on the test function $\Phi(x,y) = (2\pi )^{-d/2}
\overline{\phi}(y) \, g(x) e^{-i\scal y x}$.  Note that $\Phi \in \maclS _{s}
(\rr {2d})$ if $\phi,g \in \mathcal S_{s} (\rr d)$ and $s \geq 1/2$, and
$\Phi \in \Sigma_{s} (\rr {2d})$ if $\phi,g \in \Sigma_{s}(\rr d)$ and
$s > 1/2$, cf. \cite[Theorem~3.1]{CaTo} and \cite[Proposition~3.4]{Carypis1}.

\par

Thus the claim (A.1) is a consequence of the following Fubini-type result for
Gelfand--Shilov distributions. It corresponds to
\cite[Theorem~5.1.1]{Hormander1} in the Schwartz distribution theory. 

\par

\renewcommand{\rubrik}{Theorem A.2}

\begin{tom} 
Suppose $s \geq 1/2$, and  $f_j \in \maclS _{s}'(\rr {d_j})$, $j=1,2$. 
Then there exists a unique tensor product distribution $f = f_1
\otimes f_2 \in \maclS _{s}'(\rr {d_1+d_2})$ such that 
\begin{equation*}
\langle f_1 \otimes f_2, \phi_1 \otimes \phi_2 \rangle =
\langle f_1, \phi_1 \rangle \langle f_2, \phi_2 \rangle,
\quad \phi_j \in \mathcal S_{s}(\rr {d_j}), \quad j=1,2. 
\end{equation*}
It holds
\begin{equation*}
\langle f, \phi \rangle 
= \langle f_1, \langle f_2, \phi(x_1,x_2) \rangle \rangle
= \langle f_2, \langle f_1, \phi(x_1,x_2 )\rangle \rangle,
\quad \phi \in \mathcal S_{s}(\rr {d_1+d_2}), 
\end{equation*}
where $f_j$ acts on $x_j$ only, $j=1,2$. 

\par

The same conclusion holds for $s > 1/2$ and  $f_j \in
\Sigma_{s}'(\rr {d_j})$, $j=1,2$, with test functions in $\Sigma_s$. 
\end{tom}

\par

\begin{proof}
We use the Hermite functions 
\begin{equation*}
h_\alpha (x) = \pi ^{-\frac d4}(-1)^{|\alpha |}
(2^{|\alpha |}\alpha !)^{-\frac 12}e^{\frac {|x|^2}2}
(\partial ^\alpha e^{-|x|^2}), \quad x \in \rr d, \quad \alpha \in \nn d,  
\end{equation*}
and formal series expansions with respect to Hermite functions: 
\begin{equation*}
f = \sum_{\alpha \in \nn d} c_\alpha h_\alpha
\end{equation*}
where $\{ c_\alpha \}$ is a sequence of complex coefficients
defined by $c_\alpha = c_\alpha(f) = (f,h_\alpha)$. 

\par

It is known that Gelfand--Shilov spaces and their distribution
duals can be identified by means of such series expansions, 
with characterizations in terms of the corresponding sequence
spaces (see \cite{GrPiRo2,GraLecPilRod,Toft15} and the references
therein). 

\par

In fact, let
\begin{equation*}
f = \sum_{\alpha \in \nn d} c_\alpha h_\alpha 
\end{equation*}
and 
\begin{equation*}
\phi = \sum_{\alpha \in \nn d} d_\alpha h_\alpha 
\end{equation*}
with sequences $\{ c_\alpha\}$ and $\{d_\alpha \}$ of finite support.
Then the sesquilinear form
\begin{equation}\tag*{(A.4)}
( f, \phi ) = \sum_{\alpha \in \nn d} c_\alpha \overline{d_\alpha} 
\end{equation}
agrees with the inner product on $L^2(\rr d)$ due to the fact that
$\{ h_\alpha \}_{\alpha \in \nn d} \subseteq L^2(\rr d)$ is an orthonormal basis. 
The form (A.4) extends uniquely to the duality on $\maclS
_{s}'(\rr d) \times \mathcal S_{s}(\rr d)$ for $s \geq 1/2$, 
and to the duality on $\Sigma_{s}'(\rr d) \times \Sigma_{s}(\rr d)$ for $s > 1/2$.
All spaces are then expressed in terms of the Hilbert sequence spaces   
\begin{equation*}
\ell_{r}^2 = \ell_{r}^2(\nn d)  = \Sets{ \{c_\alpha\}}
{\sum_{\alpha \in \nn d} |c_\alpha|^2 e^{r |\alpha|^{\frac{1}{2s}}} < \infty}
\end{equation*}
where $r \in \re$. 
For $s \geq 1/2$ the space $\maclS _{s}(\rr d)$
is identified topologically as the inductive limit 
\begin{equation*}
\mathcal S_{s}(\rr d) = \bigcup_{r>0} \Sets{ \sum_{\alpha \in \nn d}
c_\alpha h_\alpha}{ \{c_\alpha\} \in \ell_{r}^2 }
\end{equation*}
and $\maclS _{s}'(\rr d)$ is identified topologically as the projective limit 
\begin{equation*}
\maclS _{s}'(\rr d) = \bigcap_{r>0}
\Sets{ \sum_{\alpha \in \nn d} c_\alpha h_\alpha}{\{c_\alpha\} \in \ell_{-r}^2 }.
\end{equation*}

For $s > 1/2$ the space $\Sigma_{s}(\rr d)$ is identified topologically
as the projective limit 
\begin{equation*}
\Sigma_{s}(\rr d) = \bigcap_{r>0} \left\{ \sum_{\alpha \in \nn d}
c_\alpha h_\alpha : \ \{c_\alpha\} \in \ell_{r}^2 \right\}
\end{equation*}
and $\Sigma_{s}'(\rr d)$ is identified topologically as the inductive limit 
\begin{equation*}
\Sigma_{s}'(\rr d) = \bigcup_{r>0} \Sets {\sum_{\alpha \in \nn d} c_\alpha h_\alpha}
{\{c_\alpha\} \in \ell_{-r}^2} .
\end{equation*}

\par

We have for $\alpha = (\alpha_1,\alpha_2) \in \nn {d_1+d_2}$ with
$\alpha_j \in \nn {d_j}$, $j=1,2$, $h_\alpha = h_{\alpha_1} \otimes h_{\alpha_2}$. 
This gives for $f_j \in \mathcal S_{s}'(\rr {d_j})$, $j=1,2$,
\begin{equation*}
c_\alpha = c_\alpha(f_1 \otimes f_2) = (f_1, h_{\alpha_1}) (f_2, h_{\alpha_2}), \quad \alpha = (\alpha_1,\alpha_2) \in \nn {d_1+d_2},
\end{equation*}
so $c_\alpha = c_{1,\alpha_1} c_{2,\alpha_2}$ if we denote
$c_{j,\alpha_j} = (f_j, h_{\alpha_j})$ where $\alpha_j \in \nn {d_j}$ for $j=1,2$.

\par

Let $\phi \in \mathcal S_{s}(\rr {d_1+d_2})$ and denote
$d_\alpha (\phi)= (\phi,h_\alpha)$ for $\alpha \in \nn {d_1+d_2}$. 
This gives for any $r>0$
\begin{equation}\tag*{(A.5)}
\langle f_1 \otimes f_2, \phi \rangle = 
\sum_{(\alpha_1,\alpha_2) \in \nn {d_1+d_2}} c_{1,\alpha_1} c_{2,\alpha_2}
\, e^{-r |(\alpha_1,\alpha_2)|^{\frac{1}{2s}}} \,
\overline{d_{\alpha_1,\alpha_2}} \, e^{r |(\alpha_1,\alpha_2)|^{\frac{1}{2s}}} .
\end{equation}
From
\begin{equation*}
e^{-r |(\alpha_1,\alpha_2)|^{\frac{1}{2s}}}
\le e^{-\frac{r}{2} |\alpha_1|^{\frac{1}{2s}}}
e^{-\frac{r}{2} |\alpha_2|^{\frac{1}{2s}}}, 
\end{equation*}
$\{ c_{1,\alpha_1} \} \in \ell_{-r}^2(\nn {d_1})$, $\{ c_{2,\alpha_2} \}
\in \ell_{-r}^2(\nn {d_2})$ for any $r>0$,  $\{ d_{\alpha_1,\alpha_2} \}
\in \ell_{r}^2(\nn {d_1+d_2})$ for some $r>0$, and the Cauchy--Schwarz inequality, 
we may now conclude that the sum (A.5) converges absolutely. 

\par

The conclusion of the theorem is thus a consequence of the well known
Fubini theorem with respect to the counting measure. 
\end{proof}

\par

\end{document}